\documentclass[letterpaper, 10 pt, conference]{ieeeconf}  

\IEEEoverridecommandlockouts                              

\usepackage{customizations}

\title{\LARGE \bf
Synthesis of the Transient Behaviour of Optimization Algorithms for Strongly Convex Objective Functions}

\author{Adel Abdelsamed$^{1}$, Dennis Gramlich$^{1}$, Christian Ebenbauer $^{1}$ and Carsten W. Scherer $^{2}$
\thanks{This work has been submitted to the IEEE
for possible publication. Copyright may be
transferred without notice, after which this
version may no longer be accessible.}
\thanks{$^{1}$Adel Abdelsamed, Dennis Gramlich and Christian Ebenbauer are with the Chair of Intelligent Control Systems, RWTH Aachen University, 52074 Aachen, Germany \{\tt\small adel.abdelsamed, dennis.gramlich, christian.ebenbauer\}@ic.rwth-aachen.de}%
\thanks{$^{2}$Carsten W. Scherer is with the Chair of Mathematical Systems Theory, University of Stuttgart, 70569 Stuttgart, Germany \tt\small carsten.scherer@imng.uni-stuttgart.de}%
\thanks{This work was supported by the German Research Foundation DFG under Grant EB425/9-1.}
}

\begin{document}

\maketitle
\thispagestyle{empty}
\pagestyle{empty}

\begin{abstract} 
We study the transient behavior of first-order optimization methods for unconstrained minimization of smooth and strongly convex objective functions. These methods are modelled as linear systems over finite-time intervals embedded in feedback with the gradient nonlinearity. By characterizing the function class through discrete-time Zames–Falb integral quadratic constraints, we employ a robust $\mathcal{H}_2$ control framework to obtain convex semidefinite programs certifying tight worst-case bounds on the terminal distance to the optimizer. 
Building on the same formulation, we derive a convex synthesis procedure for a transient-optimal method that achieves the optimal finite-horizon bounds and guarantees error reduction over any prescribed number of steps. The framework naturally extends to delayed gradient measurements, enabling both transient performance analysis and synthesis of optimal algorithms under feedback delays.
\end{abstract}
\section{INTRODUCTION}
\noindent Classical analyses of convex optimization algorithms emphasize asymptotic convergence, whereas many applications are dominated by finite-time, or transient, performance. Model predictive control is a prominent example: the optimization problem must be solved within a fixed sampling interval, meaning only a limited number of iterations are available before a decision must be made. In such settings, the transient behaviour of the algorithm — how quickly and reliably it approaches the optimizer in the first few iterations — is of greater practical relevance than its eventual asymptotic rate.

To address this gap in transient analysis, this paper aims to unify two complementary research streams. The first stream was initiated by the seminal work of Drori and Teboulle, who introduced the Performance Estimation Problem (PEP) framework for analyzing worst-case behaviour of first-order methods applied to smooth convex functions and provided approximate solutions~\cite{drori2012performancefirstordermethodssmooth}. Subsequent work by Taylor et al. delivered exact analytical PEP solutions and the Performance Estimation Toolbox (PESTO), enabling systematic semidefinite programming–based analysis over a broad range of performance criteria~\cite{taylor2016smoothstronglyconvexinterpolation, PESTO}. This line of research culminates in the analytically defined Information-Theoretic Exact Method (ITEM) for smooth strongly convex minimization, which achieves the oracle-complexity lower bound for black-box first-order methods on this problem class ~\cite{drori2021oraclecomplexitysmoothstrongly,taylor2022optimalgradientmethodsmooth}.

The second research stream, closer in spirit to this paper, adopts a control-theoretic framework for analyzing and designing optimization algorithms.  In this view, optimization methods are treated as dynamical systems and studied with tools from control theory. Earlier work has designed continuous-time optimization algorithms as smooth feedback systems, establishing convergence through passivity-based arguments~\cite{DURR2012291}, and has extended classical momentum methods, which are naturally described as second-order systems, to higher-order integrator dynamics with convergence guarantees provided by robust control techniques~\cite{multidimensionalHB}. Robust control techniques such as integral quadratic constraints (IQCs) have proven particularly powerful in this context, as also demonstrated by Lessard et al., who exploit Luré systems and IQCs to analyze the worst-case asymptotic convergence of first-order methods~\cite{Lessard_2016}. More recently, convex synthesis approaches based on dynamic IQCs have been developed to automatically design accelerated first-order methods with optimal guaranteed asymptotic rates, even in the presence of nontrivial dynamics such as delays ~\cite{convex_synthesis, scherer2025tutorial}. 

Within this control-theoretic stream, the transient behaviour of accelerated methods has also been studied in~\cite{mohammadi2023transient}. Mohammadi et al. attribute the transient growth of Nesterov's and the heavy-ball method on convex quadratic objectives to nonnormal dynamics and, using IQCs, establish analytical upper and lower bounds on the peak deviation from the optimizer for the general class of smooth and strongly convex objectives. These results quantify the transient cost of acceleration for given algorithms using a specific notion of transient growth, but does not address algorithm synthesis or additional dynamics such as delayed gradient measurements.  

The main contributions of this paper are two-fold. First, we propose a control-theoretic framework, based on discrete-time Zames--Falb IQCs, to analyze the worst-case \emph{transient} performance of first-order algorithms in the input--output domain, for general finite-horizon quadratic performance objectives. The resulting convex semidefinite programs (SDPs) provide tight finite-horizon upper bounds that coincide with those obtained from exact PEP formulations. Second, we use Zames--Falb multipliers to synthesize a transient-optimal linear time-varying first-order method. Notably, the synthesized parameters recover the information-theoretic exact method (ITEM) as a special case, demonstrating that optimal finite-horizon performance can also be realized via IQC-based synthesis. A key feature of our synthesis is the ability to explicitly model delays in the gradient measurements, which are unavoidable in practical applications due to communication latencies or computation times and can severely degrade the critical first few iterations of an algorithm.

The paper is organized as follows. Section~\ref{sec:algorithm_class} first introduces the class of smooth strongly convex objective functions, then establishes the finite-time horizon systems framework and thereby the considered class of first-order optimization algorithms from a control-theoretic perspective. To characterize the gradient nonlinearities, the notion of IQCs is also introduced. In Section~\ref{sec:problem_formulation}, the formal problem is introduced. 
Section~\ref{sec:robust_analysis_synthesis} then presents the analysis problem and a convexification of the synthesis problem, while Section~\ref{sec:simulation} numerically validates the obtained upper bounds for different well-established first-order algorithms. Lastly, in Section~\ref{sec:conclusion} a summary and an outlook are provided.  

\textbf{Notation:}
The sets $\mathbb{N}$, $\mathbb{N}_0 = \mathbb{N} \cup \{0\}$, and $\mathbb{R}$ denote the natural numbers, nonnegative integers, and real numbers, respectively. Vectors are denoted by lower-case letters, e.g., $v\in\mathbb{R}^n$, with Euclidean norm $\|v\|$. The all-ones vector is $\allones_\tau$, and $\unitvec_i$ is the $i$-th unit vector. Matrices are upper-case letters, while block matrices are \textbf{bold} upper-case. For a block matrix $\boldsymbol{A}$, the $(i,j)$-th block is $\boldsymbol{A}(i,j)$, while $A_{ij}$ denotes the $(i,j)$-th entry of an ordinary matrix. For block-diagonal $\boldsymbol{A}$, $\boldsymbol{A}(i)$ denotes the $i$-th diagonal block. The block-diagonal matrix with blocks $A(1),\ldots,A(k)$ is $\diag(A(1),\ldots,A(k))$, while $\col(A(1),\ldots,A(k))$ vertically stacks matrices with matching column dimensions.  
The identity and zero matrices of dimension $d$ are $I_d$ and $0_d$, respectively. The dimensions are omitted when clear.

For $A,B\in\mathbb{S}_n$ (the $n\times n$ symmetric matrices), $A \succ B$ ($A \succeq B$) means $A-B$ is positive (semi-)definite, analogously for $\prec$, $\preceq$; matrix inequalities $\leq$, $\geq$ are entrywise. $A\otimes B$, $\trace(A)$, and $A^\transpose$ denote Kronecker product, trace, and transpose, $\He(A) := A + A^\transpose$, and $(\bullet)^\transpose P x$ abbreviates $x^\transpose P x$. In symmetric matrices, $\star$ marks blocks determined by symmetry. The inner product is $\langle x,y\rangle = x^\transpose y$ for vectors and $\langle A,B\rangle = \trace(A^\transpose B)$ for matrices.

Lastly, $\mathbb{E}[x]$ and $\operatorname{Cov}(x)$ denote the
expectation and covariance matrix of a random vector $x$,
respectively.
\section{Class of objective functions and algorithms}\label{sec:algorithm_class}
\noindent We consider methods solving
\begin{mini*}
    {z\in \R^\optdim}{f(z),} {}{}
\end{mini*}
where the problem dimension $\optdim$ is known, but the exact function $f$ is assumed unknown. Instead, it is assumed that $f$ belongs to the class of smooth and strongly convex functions.
\begin{definition}
Let $z \in \R^\optdim$ and let $\strgcvxParam$, $\smoothParam$ be two real constants with $0 < \strgcvxParam < \smoothParam < \infty$. Any function belonging to the set 
\begin{equation*}
\begin{split}
    \smoothstrgcvxS_{\strgcvxParam, \smoothParam} := \big\{& f:\R^\optdim \rightarrow \R \,\big|\, 
     f(z) - \frac{\strgcvxParam}{2} \|z\|^2 \\
    & \text{and } \frac{\smoothParam}{2} \|z\|^2 - f(z) \text{ are convex} \big\}
\end{split}
\end{equation*}
is said to be $\strgcvxParam$-strongly convex and $\smoothParam$-smooth. The ratio $\kappa = \frac{\smoothParam}{\strgcvxParam}$ is called the condition number of the class.
\end{definition}
Both the strong convexity parameter $\strgcvxParam$ and the smoothness parameter $\smoothParam$ are assumed to be known. Note that the special case $\strgcvxParam = \smoothParam$ is excluded from the analysis, as it results in trivial quadratic functions. Functions $f \in \smoothstrgcvxS_{\strgcvxParam, \smoothParam}$ are differentiable and admit a unique global minimizer $z_\star = \argmin_z\, f(z) \in \R^\optdim$ such that $\nabla f(z_\star) = 0$~\cite{scherer2023robustexponentialstabilityinvariance}.

For analytical convenience, we consider the subclass of functions in $\smoothstrgcvxS_{\strgcvxParam,\smoothParam}$ whose unique minimizer is the origin:
$ \smoothstrgcvxS_{\strgcvxParam, \smoothParam}^0 = \left\{f\in \smoothstrgcvxS_{\strgcvxParam, \smoothParam}\,\middle|\, \nabla f(0) = 0\right\}.$
Subsequent sections demonstrate that results established for $\smoothstrgcvxS_{\strgcvxParam, \smoothParam}^0$ extend naturally to the broader class $\smoothstrgcvxS_{\strgcvxParam, \smoothParam}$.

\noindent In prior work~\cite{Lessard_2016,scherer2023robustexponentialstabilityinvariance,scherer2025tutorial}, a class of first-order optimization algorithms has been modelled as feedback interconnections between a linear system represented in state space and the gradient nonlinearity. In particular, the linear part is described by
\begin{equation}
\begin{pmatrix}
    x(t+1) \\ z(t) \\ y(t)
\end{pmatrix}
=
\begin{pmatrix}
    A\otimes\eye_\optdim & B\otimes\eye_\optdim\\
    C_z\otimes\eye_\optdim & 0 \\
    C_y\otimes\eye_\optdim & 0
\end{pmatrix}
\begin{pmatrix}
    x(t) \\ w(t)
\end{pmatrix},
\label{eq:state_space_opt_alg}
\end{equation}
with the feedback relation \(w(t)=\nabla f(z(t))\). Here, \(x(t)\in\R^{nd}\) denotes the state. The two output signals serve distinct purposes: \(z(t)\in\R^\optdim\) specifies points at which the gradient is evaluated, whereas \(y(t)\in\R^\optdim\) denotes the iterates generated by the optimization algorithm. The Kronecker products with $\eye_\optdim$ arise because the considered optimization algorithms apply an identical scalar update rule to each coordinate, excluding methods that scale coordinates differently, such as quasi-Newton methods.
\begin{example}\label{ex:nagm_example}
Consider the recursion defined by Nesterov's accelerated gradient method (NAGM),
\begin{equation*}
\begin{aligned}
    y(t+1) &= z(t) - \alpha \nabla f(z(t)), \\
    z(t) &= (1+\beta)y(t) - \beta y(t-1),
\end{aligned}
\end{equation*}
where \(\alpha \in \left(0, \frac{2(1+\beta)}{L}\right)\) is the step size, \(\beta\in(0,1)\) the momentum parameter, and we assume the method is initialized by $y(-1) = y(0)$. This initialization convention implies $z(0) = (1+\beta)y(0) - \beta y(-1) = y(0)$, which corresponds to applying a gradient descent step for the first iteration. A state-space description of this method is obtained with
\begin{equation}
\begin{split}
    A &= \begin{pmatrix}
        1+\beta & -\beta \\
        1 & 0
    \end{pmatrix}, \qquad
    B = \begin{pmatrix}
        -\alpha \\
        0
    \end{pmatrix}, \\
    C_z &= \begin{pmatrix}
        1+\beta & -\beta
    \end{pmatrix}, \qquad
    C_y = \begin{pmatrix}
        1 & 0
    \end{pmatrix}.
\end{split}
\label{eq:nagm_ss_matrices}
\end{equation}
Notably, $C_z \neq C_y$, reflecting the fact that the gradient 
evaluation point $z(t)$ and the algorithm's iterate $y(t)$ are not the same signal for NAGM.
\end{example}

Since our focus is on the transient behaviour of optimization algorithms, we adopt a different perspective on this feedback interconnection. Specifically, we consider algorithms over a finite-time horizon and describe their linear dynamics through an \emph{input-output} representation. To this end, we first introduce the notion of a signal defined on a finite horizon.
\begin{definition}\label{def:signal}
Let $\horizonSet_\tau := \{0, 1, \dots, \tau\} \subset \N_0$  be a discrete time interval, where $\tau$ is the signal horizon length. A signal $\zSignal$ is a mapping $\zSignal :\horizonSet_\tau \rightarrow \R^\optdim$. The signal space $\signalSpace_{2e}^\optdim(\horizonSet_\tau)$ is the set of all signals $\zSignal$ on the horizon $\horizonSet_\tau$.
\end{definition}

Of particular interest is the pointwise action of the gradient map
$\nabla f$ on signals defined over the finite horizon. For a fixed
$f\in\smoothstrgcvxS_{\strgcvxParam, \smoothParam}$, we denote the lifted gradient operator
\(\Delta_f :\ell_{2e}^{d}(\horizonSet_\tau)\rightarrow\ell_{2e}^{d}(\horizonSet_\tau)\)
by
\(
\Delta_f(\boldsymbol{z})
:=
\operatorname{col}\bigl(
\nabla f(z(0)),\ldots,\nabla f(z(\tau))
\bigr).
\)
Thus, $\nabla f:\mathbb{R}^d\to\mathbb{R}^d$ denotes the gradient
map acting on individual points $z(t)$, whereas $\Delta_f$ denotes its
pointwise extension to finite-horizon signals $\zSignal$.
 In control-theoretic terms, $\Delta_f$ defines a repeated nonlinearity.

With this finite-time horizon perspective, optimization algorithms are modelled as linear dynamical systems between signal spaces.
\begin{definition}
A linear system on a discrete interval $\horizonSet_\tau$ is a linear mapping $\G: \signalSpace_{2e}^{\inputdim}(\horizonSet_\tau) \to \signalSpace_{2e}^{\outputdim}(\horizonSet_\tau)$, where $\inputdim$ and $\outputdim$ are the dimensions of the input and output signals, respectively. The set of linear systems from $\signalSpace_{2e}^{\inputdim}(\horizonSet_\tau)$ to $\signalSpace_{2e}^{\outputdim}(\horizonSet_\tau)$ is denoted by $\linsysSet(\signalSpace_{2e}^{\inputdim}(\horizonSet_\tau), \signalSpace^{\outputdim}_{2e}(\horizonSet_\tau))$.
\label{def:lin_io_systems}
\end{definition}
Since optimization algorithms exclusively utilize past and current gradients to compute the next iterate, structural constraints on the mapping $\G$ must be imposed.  
\begin{lemma}(cf.~\cite{adel2025})
A linear system $\G: \signalSpace_{2e}^{\inputdim}(\horizonSet_\tau) \to \signalSpace_{2e}^{\outputdim}(\horizonSet_\tau)$ is causal if and only if its matrix representation has a block-lower-triangular structure
\begin{equation*}
    \G = 
\begin{pmatrix}
G(0,0) & \zeros& \cdots & \zeros\\
G(1, 0) &G(1,1)  & \cdots & \zeros\\
\vdots & \vdots & \ddots & \vdots \\
G(\tau,0) & G(\tau,1)  & \cdots & G(\tau,\tau) 
\end{pmatrix}
\end{equation*}
where each block $G(t,j) \in \R^{\outputdim \times \inputdim}$ represents the contribution from input at time $j$ to the output at time $t$.
\label{theorem:causality}
\end{lemma}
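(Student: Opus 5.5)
The plan is to first fix the definition of causality on the finite horizon, since the statement uses it without stating it. I will take the standard truncation-based definition. For $t\in\horizonSet_\tau$ let $P_t:\signalSpace_{2e}^{\inputdim}(\horizonSet_\tau)\to\signalSpace_{2e}^{\inputdim}(\horizonSet_\tau)$ be the truncation operator, which keeps $w(0),\dots,w(t)$ and sets the remaining samples to zero; define $P_t$ on output signals analogously. Then $\G$ is causal if $P_t\G = P_t\G P_t$ for all $t\in\horizonSet_\tau$. Equivalently, whenever two inputs agree on $\{0,\dots,t\}$, the corresponding outputs agree on $\{0,\dots,t\}$. By linearity this reduces to the following condition: if $w(0)=\dots=w(t)=0$, then $(\G w)(s)=0$ for all $s\le t$.

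Next I identify $\G$ with its matrix. Stacking signals as $\col(w(0),\dots,w(\tau))\in\R^{(\tau+1)\inputdim}$, every linear map is a block matrix with blocks $G(t,j)\in\R^{\outputdim\times\inputdim}$, and
\[
(\G w)(t)=\sum_{j=0}^{\tau} G(t,j)\,w(j).
\]

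\emph{Sufficiency.} Suppose $G(t,j)=0$ for $j>t$. Then $(\G w)(t)=\sum_{j\le t}G(t,j)w(j)$ depends only on $w(0),\dots,w(t)$. Hence two inputs that agree up to time $t$ produce outputs that agree at every time $s\le t$, which is causality.

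\emph{Necessity.} Suppose $\G$ is causal and fix $j>t$ and a vector $v\in\R^{\inputdim}$. Consider the input $w=\unitvec_j\otimes v$, which equals $v$ at time $j$ and zero elsewhere. This input agrees with the zero input on $\{0,\dots,t\}$, because $j>t$. By causality and linearity, $(\G w)(t)=(\G 0)(t)=0$. On the other hand, $(\G w)(t)=G(t,j)v$. Since $v$ is arbitrary, $G(t,j)=0$ for all $j>t$, which is exactly the block-lower-triangular structure.

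No step is a real obstacle. The only care needed is to state the causality definition explicitly, since the lemma is cited from \cite{adel2025}, and to use linearity to pass from "equal inputs up to $t$" to "zero inputs up to $t$".
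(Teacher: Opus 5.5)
Your proof is correct and takes the same approach as the paper, whose proof only says that both directions ``follow directly from the definition of causality.'' You fill in that omitted argument: you fix the truncation definition, read off $(\G w)(t)=\sum_{j\le t}G(t,j)w(j)$ for sufficiency, and test with an impulse at time $j>t$ for necessity. The one nit is indexing: with times starting at $0$, the paper's unit-vector convention would write that impulse as $\unitvec_{j+1}\otimes v$ rather than $\unitvec_j\otimes v$.
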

\begin{proof}
The necessity and sufficiency follow directly from the definition of causality.
\end{proof}
A refinement of the causality property is the notion of relative degree, which captures how many time steps elapse before an input can influence the output.
\begin{remark}
A linear system $\G$ has \emph{relative degree} $r \in \N_0$ if its matrix is $r$-strictly lower triangular, i.e., $G(t,j)=0$ for all $t \le j+r-1$. Equivalently, an input at time $j$ affects the output only from time $j+r$ onward.
\end{remark}

For the considered class of optimization algorithms, the input and output dimensions coincide with the dimension of the decision variable, i.e., $\inputdim = \outputdim := \optdim$, and the block matrices take the form $G(t,j) = g_{t,j}\eye_\optdim$ for scalars $g_{t,j} \in \R$. The set of such linear systems with relative degree $r \in \N_0$ and horizon $\tau$ is denoted by $\linsysSet^r(\optdim, \tau)$, where $r = 0$ recovers the causal case of Lemma~\ref{theorem:causality}.

To derive the input-output representation of the general state-space representation~\eqref{eq:state_space_opt_alg} the recursion is unrolled over the horizon $\tau$, yielding
\begin{equation}
\begin{split}
    \textbf{z} = \mathcal{O}_{C_z} y(0) + \textbf{G}_{\zSignal\wSignal}\textbf{w}, \qquad 
    \textbf{y} = \mathcal{O}_{C_y} y(0) + \textbf{G}_{\ySignal\wSignal}\textbf{w},
\end{split}
\label{eq:general_io_from_ss}
\end{equation}
where $\mathcal{O}_{C_z}$ and $\mathcal{O}_{C_y}$ are the 
observability matrices for output $\zSignal$ and $\ySignal$, and the blocks of $\G_{\zSignal\wSignal}$ and $\G_{\ySignal\wSignal}$ are given by
$G_{\zSignal\wSignal}(t,j) = C_z A^{t-j-1} B$ and $G_{\ySignal\wSignal}(t,j) = C_y A^{t-j-1} B$ for $t > j$, and zero otherwise. Substituting the state-space matrices of NAGM~\eqref{eq:nagm_ss_matrices} in~\eqref{eq:general_io_from_ss} gives
\begin{equation*}
    \zSignal = \allones_\tau \otimes y(0) + \G^{\text{NAGM}}_{\zSignal\wSignal}\,\wSignal,
    \qquad
    \ySignal = \allones_\tau \otimes y(0) + \G^{\text{NAGM}}_{\ySignal\wSignal}\,\wSignal,
\end{equation*}
with the operators 
\begin{equation*}
\begin{aligned}
    G^{\text{NAGM}}_{\zSignal\wSignal}(t,j) 
    & = -\alpha\,\frac{1-\beta^{t-j+1}}{1-\beta}\eye_\optdim 
    & \qquad t>j,\\
    G^{\text{NAGM}}_{\ySignal\wSignal}(t,j) 
    & = -\alpha\,\frac{1-\beta^{t-j}}{1-\beta}\eye_\optdim 
    & \qquad t>j.
\end{aligned}
\end{equation*}
The term $\allones_\tau\otimes y(0)$ arises from the integral action in the loop, which constitutes a minimal requirement for convergence, and is not specific to NAGM~\cite{scherer2025tutorial}. 
It carries an important implication: if the iterates are initialized at the optimum $y(0) = z_\star$, they remain there, consistent with 
the first-order optimality condition $\nabla f(z_\star) = 0$.

\begin{figure}[t]
    \centering
    \resizebox{0.35\textwidth}{!}{\tikzstyle{block} = [draw,rectangle,thick,minimum height=1em,minimum width=1em,
    shade, shading=axis, top color=lightgray, bottom color=white, rounded corners=.15cm]
    
\tikzstyle{sum} = [draw,circle,inner sep=0mm,minimum size=0mm]
    
\tikzstyle{connector} = [-,thick]
    
\tikzstyle{arrowconnector} = [-stealth,thick]

\begin{tikzpicture}[scale=1.0, auto]
    
    \node[block] (system) {
        $\begin{aligned}
            \zSignal &= \allones_{\tau}\otimes \xi + \G_{\zSignal\wSignal}\wSignal \\
            \ySignal &= \G_{\perfOut\perfIn} \xi + \G_{\ySignal\wSignal}\wSignal
        \end{aligned}$
    };
    
    \node[block, above=1.25cm of system] (gradient) {
        $\liftf(\zSignal)$
    };
    
    \coordinate (input_point) at (-3.5, 0);
    \draw[connector] (input_point) -- ([xshift=-0.1cm]system.west);
    \draw[arrowconnector] ([xshift=-0.1cm]system.west) -- (system.west);
    \node[above] at ([xshift=-1.0 cm]system.west) {$\wSignal$};
    
    \coordinate (z0_point) at (0, 1.5);
    \draw[arrowconnector] (z0_point) -- (system.north);
    \node[right] at (0, 1.5) {$\xi$};
    
    \draw[arrowconnector] ([yshift=0.4cm]system.east) -- ++(1.5,0) |- (gradient.east);
    \node[above] at ([xshift=0.75cm, yshift=0.4cm]system.east) {$\zSignal$};
    
    \draw[arrowconnector] ([yshift=-0.4cm]system.east) -- ++(1.5,0);
    \node[above] at ([xshift=0.75cm, yshift=-0.4cm]system.east) {$\ySignal$};
    
    \draw[connector] (gradient.west) -| (input_point);
    
\end{tikzpicture}}
    \caption{Feedback interconnection between gradient nonlinearity and linear plant.}
    \label{fig:feedback_interconnection}
\end{figure}

The derived input-output representation of NAGM motivates the 
following general algorithm class. Let $\horizonSet_{\tau-1}$ be a finite 
horizon and let $\xi := y(0)\in\R^\optdim$ be the initial iterate. The class of 
first-order optimization algorithms considered in this paper admits the 
input-output description
\begin{equation}
\begin{aligned}
    \zSignal &= \allones_\tau\otimes \xi + \G_{\zSignal\wSignal}\wSignal, \\
    \ySignal &= \G_{\perfOut\perfIn} \xi + \G_{\ySignal\wSignal}\wSignal, \quad     \wSignal &= \liftf(\zSignal),
\end{aligned}
\label{eq:io_feedback_interconnection}
\end{equation}
as depicted in Figure~\ref{fig:feedback_interconnection}, where 
$\G_{\zSignal\wSignal}$ and $\G_{\ySignal\wSignal}$ belong to 
$\linsysSet^{r}(\optdim,\tau-1)$. The signal $\zSignal$ specifies the 
gradient evaluation points, while $\ySignal$ is the performance output.
As seen in Example~\ref{ex:nagm_example}, these signals are generally distinct, and thus maintaining this separation allows momentum-based methods to be represented within the proposed algorithm class.
The operator $\G_{\perfOut\perfIn}$ is left general to allow $\ySignal$ 
to represent arbitrary performance criteria, such as the final iterate or 
a final gradient. Importantly, the class of algorithms represented by~\eqref{eq:io_feedback_interconnection} extends the class captured by the state-space description of~\eqref{eq:state_space_opt_alg}, since it additionally includes time-varying algorithms, such as gradient descent with adaptive step sizes. Moreover, allowing $r > 1$ in $\G_{\zSignal\wSignal}, \G_{\ySignal\wSignal} \in \linsysSet^r(\optdim, \tau-1)$ captures algorithms with delayed gradient feedback.

Since the objective function $f$ is unknown, the feedback interconnection in Figure~\ref{fig:feedback_interconnection} cannot be analyzed directly. Before formally stating the problem in 
Section~\ref{sec:problem_formulation}, we first introduce the tools needed to handle this uncertainty. 
By restricting $f$ to $\smoothstrgcvxS_{\strgcvxParam,\smoothParam}$, the gradient belongs to the set of $(\strgcvxParam,\smoothParam)$-slope-restricted gradients. Using IQCs, a systematic treatment of such feedback systems is possible by replacing the uncertain, possibly nonlinear gradient $\nabla f$ with quadratic constraints relating its inputs and outputs.

\begin{definition}
Let $\iqcmultiplier$ be a symmetric matrix partitioned as
\[
\iqcmultiplier = \begin{pmatrix}
\iqcmultiplier_{11} & \iqcmultiplier_{12} \\
\iqcmultiplier_{12}^\transpose & \iqcmultiplier_{22}
\end{pmatrix} \in \R^{2d(\tau+1)\times2d(\tau+1)}.
\]
An (nonlinear) operator $\uncertainty: \signalSpace_{2e}^\optdim(\horizonSet_\tau) \to \signalSpace_{2e}^\optdim(\horizonSet_\tau)$ is said to satisfy the IQC defined by $\iqcmultiplier$ if
\begin{equation}
0 \leq \begin{pmatrix}
\uSignal \\ \Delta(\uSignal)
\end{pmatrix}^\transpose \iqcmultiplier \begin{pmatrix}
\uSignal \\ \Delta(\uSignal)
\end{pmatrix}
\label{eq:iqc_inequality}
\end{equation}
holds for all $\uSignal \in \signalSpace_{2e}^\optdim(\horizonSet_\tau)$. $\iqcmultiplier$ is called the IQC multiplier.
\end{definition}

To establish IQCs for the considered class of $(\strgcvxParam,\smoothParam)$-slope restricted gradients, a special class of matrices called \emph{doubly hyperdominant} matrices is defined.
\begin{definition}
A square matrix $\multiplierM$ of dimension $\tau$ is called doubly hyperdominant if $\multiplierM \in \dhmatricesSet_{\tau}$, where
\begin{equation*}
\dhmatricesSet_{\tau} :=
\left\{
\multiplierM \in \R^{\tau \times \tau} \;\middle|\;
\begin{aligned}
& \multiplierM \allones_{\tau} \geq 0,\quad \allones_{\tau}^\transpose \multiplierM \geq 0, \\
& \multiplierM_{ij} \leq 0,\quad \forall i\neq j
\end{aligned}
\right\}.
\end{equation*}

\end{definition}

The next lemma introduces discrete-time Zames-Falb IQCs characterizing $(\strgcvxParam,\smoothParam)$-slope restricted gradients through doubly hyperdominant matrices.

\begin{lemma}(cf.~\cite{FETZER2017}). Let $f : \R^\optdim \rightarrow \R$ with 
$f \in \smoothstrgcvxS_{\strgcvxParam,\smoothParam}^0$. Then
\begin{equation}
\label{eq:quadratic_form_ozf_multiplier}
0 \leq 
\begin{pmatrix} \zSignal \\  \liftf(\zSignal) \end{pmatrix}^\transpose \,\boldsymbol{\Pi}_{\multiplierM}\, \begin{pmatrix} \zSignal \\  \liftf(\zSignal) \end{pmatrix}
\end{equation}
for all $\zSignal \in \signalSpace_{2e}^\optdim(\horizonSet_\tau)$, where
\begin{equation*}
\boldsymbol{\Pi}_{\multiplierM} := 
\begin{pmatrix}
\smoothParam\eye & -\eye \\
-\strgcvxParam\eye & \eye 
\end{pmatrix}^\transpose
\begin{pmatrix}
\zeros & \boldsymbol{\multiplierM} \\
\boldsymbol{\multiplierM}^\transpose & \zeros
\end{pmatrix}
\begin{pmatrix}
\smoothParam\eye & -\eye \\
-\strgcvxParam\eye & \eye 
\end{pmatrix},
\end{equation*}
with $\boldsymbol{\multiplierM} = \multiplierM \otimes \eye_{\optdim}$ for $\multiplierM \in \dhmatricesSet_{\tau+1}$. Here, $\eye$ and $\zeros$ denote identity and zero matrices of dimension $(\tau+1)\optdim$.
\end{lemma}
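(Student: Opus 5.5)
Writing $p := \smoothParam\zSignal - \liftf(\zSignal)$ and $q := \liftf(\zSignal) - \strgcvxParam\zSignal$, the quadratic form in \eqref{eq:quadratic_form_ozf_multiplier} equals $2\,p^\transpose \boldsymbol{\multiplierM}\, q = 2\sum_{i,j} \multiplierM_{ij}\, p(i)^\transpose q(j)$, where $p(i) = \smoothParam z(i) - \nabla f(z(i))$ and $q(j) = \nabla f(z(j)) - \strgcvxParam z(j)$. The plan is to reduce to the convex function $h(z) := f(z) - \frac{\strgcvxParam}{2}\|z\|^2$. It is $(\smoothParam-\strgcvxParam)$-smooth and satisfies $\nabla h(0)=0$, and we have $q(j) = \nabla h(z(j))$ and $p(i) = (\smoothParam-\strgcvxParam) z(i) - \nabla h(z(i))$. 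We then show that $\sum_{i,j}\multiplierM_{ij}\, p(i)^\transpose q(j) \ge 0$ for all doubly hyperdominant $\multiplierM$.

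\textbf{Step 1: decompose $\multiplierM$.} Write
\[
\sum_{i,j}\multiplierM_{ij} p(i)^\transpose q(j) = \sum_i \Big(\sum_j \multiplierM_{ij}\Big) p(i)^\transpose q(i) + \sum_{i\neq j} (-\multiplierM_{ij})\,\big(p(i)^\transpose q(i) - p(i)^\transpose q(j)\big).
\]
The coefficients $\sum_j \multiplierM_{ij}$ and $-\multiplierM_{ij}$ are nonnegative. This handles only the row sums, so a symmetric rearrangement is needed to use the column sums as well. The standard route is to note that $\dhmatricesSet_{\tau+1}$ is the convex cone generated by two families of matrices. The first family consists of $\unitvec_i\unitvec_i^\transpose$. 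The second consists of matrices with a $1$ at position $(i,i)$ and a $-1$ at $(i,j)$, together with their transposed counterparts. I would avoid proving this cone description exactly and instead use the inequality that follows.

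\textbf{Step 2: pointwise inequalities from the function class.} For $L' := \smoothParam-\strgcvxParam$ and the convex, $L'$-smooth $h$ with minimizer $0$, the interpolation inequality (cocoercivity with function values) gives, for all points $a,b$ with $g_a = \nabla h(a)$ and $g_b=\nabla h(b)$,
\[
h(a) \ge h(b) + g_b^\transpose(a-b) + \tfrac{1}{2L'}\|g_a-g_b\|^2 .
\]
Including the optimum $b=0$ (where $g=0$ and $h(0)$ is minimal) yields $p(i)^\transpose q(i)\ge 0$, which settles the diagonal terms. For the pair terms, the goal is
\[
\sum_{i,j}\multiplierM_{ij}\, p(i)^\transpose q(j) \;\ge\; L'\Big(\sum_{i,j}\multiplierM_{ij}\,\big(h(z(i)) - h(z(j))\big)\Big)\cdot(\text{sign bookkeeping}),
\]
so that the function-value terms telescope. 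Concretely, rescaling the interpolation inequality by $L'$ gives $L'(h_a - h_b) \ge p_b^\transpose q_b - p_a^\transpose q_b$ up to nonnegative remainder terms, and summing against the weights lets the $h$-values cancel. The row-sum and column-sum conditions of $\dhmatricesSet_{\tau+1}$ guarantee that the leftover coefficients of $h(z(i))-h(0)\ge 0$ are nonnegative. Here $h(0)$ enters through the optimum, which is why $\nabla f(0)=0$ is required. Summing over all pairs, with weights $-\multiplierM_{ij}\ge0$ together with the nonnegative row and column excesses, gives the claim.

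\textbf{Main obstacle.} The delicate part is the bookkeeping in Step 2. The function values must cancel exactly, leaving only terms of the form $(\text{row or column sum})\cdot(h(z(i))-h(0))\ge 0$. Using only the row sums or only the column sums is what the naive decomposition gives, and it fails for non-symmetric $\multiplierM$. I would therefore handle the general case by splitting the weight on each $h(z(i))$ into an ``outgoing'' part controlled by the row sum and an ``incoming'' part controlled by the column sum. This is the route of \cite{FETZER2017}, and I would follow its telescoping argument.
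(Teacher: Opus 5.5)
\textbf{There is a genuine gap.} The paper gives no proof of this lemma; it only cites \cite{FETZER2017}. Your overall plan is the standard O'Shea--Zames--Falb route: pass to the convex, $L'$-smooth $h=f-\frac{m}{2}\|\cdot\|^2$, bound each off-diagonal term $p(i)^\top q(j)$ by a convexity inequality, and let a potential telescope so that row and column sums both appear. However, the step that carries the whole proof is left as a placeholder (``sign bookkeeping''), and the one concrete inequality you give for it is false.

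Your claim $L'(h_a-h_b)\ge p_b^\top q_b-p_a^\top q_b$ fails already for $d=1$, $L'=1$, $h(x)=x^2/4$ (so $p=q=x/2$), $a=0$, $b=1$, where it reads $-\frac14\ge\frac14$. Multiplying the interpolation inequality by $L'$ and expanding $p_a=L'a-\nabla h(a)$, $q_a=\nabla h(a)$ gives instead
\[
p_a^\top q_b\le p_b^\top q_b+\Psi_a-\Psi_b,\qquad \Psi_a:=L'\bigl(h(a)-h(0)\bigr)-\frac{1}{2}\|\nabla h(a)\|^2 .
\]
Two things differ from your version. First, the cross-term difference has the opposite sign. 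Second, the potential is not $L'h$: the gradient-norm terms are sign-indefinite, so they must telescope together with the function values rather than be dropped as ``nonnegative remainders''. With $L'h$ alone the argument cannot work: for $h=\frac{L'}{2}\|z\|^2$ one has $p\equiv 0$, while $L'(h_a-h_b)$ takes both signs.

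\textbf{How the correct bound closes the proof.} Write $\Psi_i:=\Psi_{z(i)}$ and let $r_i$, $c_j$ be the row and column sums of $M$. Since $M_{ij}\le 0$ for $i\neq j$, the bookkeeping is one line:
\[
\sum_{i,j}M_{ij}\,p(i)^\top q(j)\;\ge\;\sum_{j}c_j\bigl(p(j)^\top q(j)-\Psi_j\bigr)+\sum_{i}r_i\Psi_i .
\]
Both brackets are nonnegative by the interpolation inequality at the optimum. Use $(a,b)=(z(i),0)$ to get $\Psi_i\ge 0$, and $(a,b)=(0,z(j))$ for the other bracket; this is where $\nabla f(0)=0$ enters.

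So the column sums appear automatically, and no symmetric rearrangement or cone decomposition is needed. Your row-sum split in Step 1 also works verbatim if you pair with $p(i)^\top q(i)$ and use the conjugate potential $p^\top q-\Psi$; it yields the same right-hand side.

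Note also what nonnegativity is actually required. It is neither $p(i)^\top q(i)\ge 0$ nor $h(z(i))-h(0)\ge 0$. The column-sum bracket even contains $L'\bigl(h(0)-h(z(j))\bigr)\le 0$, which is compensated by $L'z(j)^\top\nabla h(z(j))-\frac12\|\nabla h(z(j))\|^2$.

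\textbf{Smaller error.} Your cone description in Step 1 is wrong. The matrix $e_ie_i^\top-e_ie_j^\top$ has sum $-1$ in column $j$, so it is not doubly hyperdominant. The cone is generated by $e_ie_i^\top$ together with matrices of directed paths and cycles, e.g.\ $e_ie_i^\top+e_je_j^\top-e_ie_j^\top$. You do not use this description, but it should not be asserted.
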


\section{Problem Formulation}
\label{sec:problem_formulation}
\noindent Classical analyses evaluate the performance of optimization algorithms through the worst-case convergence rate $\rho_{wc}(\kappa) \in (0,1)$ with the associated bound
\begin{equation}
    \|y(t) - z_\star\|^2 \leq c(\kappa)^2\,\rho_{wc}^{2t}(\kappa)\,\|\xi - z_\star\|^2,
\end{equation}
which must be satisfied for every $f \in \smoothstrgcvxS_{\strgcvxParam,\smoothParam}$ and every initial condition $\xi$ at each stage $t$. 
This asymptotic bound separates two factors with fundamentally different roles. While $\rho_{wc}^{2t}(\kappa)$ dominates
for large $t$, finite-horizon performance is strongly influenced by the generally unknown constant $c(\kappa)$, which can be large. In this paper, we propose a new bound on the terminal distance, which merges the asymptotic and transient effects.
The considered finite-horizon bound is of the form
\begin{equation}\label{eq:transient_bound}
    \|y(\tau) - z_\star\|^2 \leq \gamma(\kappa,\tau)^2\,\|\xi - z_\star\|^2,
\end{equation}
where $\gamma(\kappa,\tau)^2$ is the worst-case squared amplification factor over a horizon of $\tau$
iterations. Unlike $\rho_{wc}^{2t}(\kappa)$, a convergent algorithm might have  $\gamma(\kappa,\tau)^2\geq 1$ for small $\tau$. In general, transient performance of an algorithm depends on the algorithm parameters, the objective function, and the initial condition. For optimization algorithms representable by the 
class~\eqref{eq:io_feedback_interconnection}, modelling their linear 
dynamics over finite-time intervals renders both the \emph{analysis} 
of $\gamma(\kappa,\tau)^2$ for a given algorithm and the 
\emph{synthesis} problem tractable via convex SDPs.

In particular, in the synthesis problem, we seek algorithm parameters that minimize a worst-case performance criterion over all $f\in\smoothstrgcvxS_{\strgcvxParam,\smoothParam}$, while measuring the sensitivity to the initial condition through an average quadratic criterion. To this end, the initial iterate $\xi$ is modelled as an exogenous random vector with finite first and second moments satisfying
\(
\mathbb{E}[\xi]=z_\star, \operatorname{Cov}(\xi)=\boldsymbol{\Sigma}_0=I.\)
A quadratic performance output is used to capture objectives such as the terminal distance to the optimizer. In control theory, the expected value of such a quadratic output, when the system is driven by a random exogenous input with prescribed second-order moments, defines an $\etchH_2$-type performance criterion. The resulting synthesis problem is therefore a robust $\etchH_2$ problem, in which the unknown gradient map $\nabla f$ plays the role of the uncertainty. We formally define the problem.
\begin{problem}
Let $f \in \smoothstrgcvxS_{\strgcvxParam,\smoothParam}$ be an objective function with
minimizer $z_\star$, and let $\zSignal_\star := \col(z_\star,\ldots,z_\star)$. Furthermore, let
$\xi$ be a random vector satisfying
\(\mathbb{E}[\xi]=z_\star,\operatorname{Cov}(\xi)=\boldsymbol{\Sigma_0}=I\). The transient performance problem is of the form
\begin{mini}
    {\boldclosedloopG_{\zSignal\unceOut}, \boldclosedloopG_{\ySignal\unceOut}}
    {\sup_{f \in \smoothstrgcvxS_{\strgcvxParam, \smoothParam}}
     \mathbb{E}\langle \perfOut - \zSignal_\star,\,
     \boldsymbol{R}_{\perfOut\perfOut} (\perfOut - \zSignal_\star) \rangle}
    {\label{prob:min_sup_general_analysis}\tag{$\mathcal{H}_{2}$-opt}}{}
    \addConstraint{\unceIn}{=  \allones_{\tau} \otimes \xi
                              + \boldclosedloopG_{\zSignal\unceOut} \unceOut,}
    \addConstraint{\perfOut}{= \boldclosedloopG_{\perfOut\perfIn} \xi
                              + \boldclosedloopG_{\ySignal\unceOut} \unceOut,}
    \addConstraint{\unceOut}{=\liftf(\unceIn).}
\end{mini}
\end{problem}
where $\boldsymbol{R}_{\perfOut\perfOut}$ defines the performance output weighting matrix. For the terminal distance objective~\eqref{eq:transient_bound},
$\boldsymbol{R}_{\perfOut\perfOut} = \diag(\zeros, \dots, \zeros, \eye)$, such that
$\mathbb{E}\langle \perfOut - \zSignal_\star, \boldsymbol{R}_{\perfOut\perfOut}
(\perfOut - \zSignal_\star)\rangle = \mathbb{E}\|y(\tau) - z_\star\|^2$.

For $\optdim=1$, the criterion also admits a deterministic interpretation. Since the initial error has unit second moment, the resulting bound coincides with an upper bound on the squared transient amplification factor $\gamma(\kappa,\tau)^2$ in~\eqref{eq:transient_bound} over all initial errors satisfying \(||\xi-z_\star||^2\leq 1.\)
Subsequently, Corollary~\ref{lemma:dimred} shows that considering $\optdim=1$ entails no loss of generality.

The system dynamics are defined through the matrices
\begin{equation}
\begin{split}
\boldclosedloopG_{\unceIn\unceOut} &= \closedloopG_1 \otimes \eye_\optdim \in \linsysSet^1(\optdim, \tau-1),\\ \boldclosedloopG_{\perfOut\perfIn} &= \allones_\tau\otimes\eye_\optdim,\\ \boldclosedloopG_{\perfOut\unceOut} &= \unitvec_\tau \otimes \closedloopG_2\otimes I_\optdim, \quad \closedloopG_2 \in \R^{1 \times \tau}.
\label{eq:closed_loop_matrices}
\end{split}
\end{equation}
Furthermore, $\closedloopG_2$ is used to describe the terminal iterate $y(\tau)$ as a linear combination of the processed gradient history. Since both $\closedloopG_1$ and $\closedloopG_2$ use gradient information to generate iterates, they represent the algorithm (controller) parameters in this framework. Their roles differ, however: $\closedloopG_1$ aggregates gradients to shape update directions (similar to momentum), whereas $\closedloopG_2$ uses the full history to form an optimal estimate of the minimizer.

In the following, we restrict to $r=1$ and the terminal distance objective; both the proofs and the framework extend directly to $r>1$ and alternative objectives such as the terminal gradient.

Working with the class $\smoothstrgcvxS_{\strgcvxParam,\smoothParam}$ requires explicit knowledge of the optimizer $z_\star$. Therefore, the following lemma reformulates the problem in an equivalent way by shifting the minimizer to the origin.
\begin{lemma}
\label{lemma:h2_equivalence_error_coordinates}
Consider the Problem~\eqref{prob:min_sup_general_analysis}, and let $f \in \smoothstrgcvxS_{\strgcvxParam, \smoothParam}$ attain its minimum at $z_\star$. Define the error coordinates of the uncertainty channel $\delta \unceIn := \unceIn - \zSignal_\star$, the error coordinates of the performance channel as $\delta \perfOut:= \perfOut - \boldclosedloopG_{\perfOut\perfIn}z_\star$, and of the initial condition $\delta \xi := \xi - z_\star$. Then, $\mathbb{E}[\delta \xi] = 0, \operatorname{Cov}(\delta \xi) = \boldsymbol{\Sigma_0}$, and \eqref{prob:min_sup_general_analysis} is equivalent to
\begin{mini}
    {\boldclosedloopG_{\zSignal\unceOut}, \boldclosedloopG_{\ySignal\unceOut}}{\sup_{ h \in \smoothstrgcvxS_{\strgcvxParam, \smoothParam}^0} \mathbb{E}\langle \delta\perfOut, \boldsymbol{R}_{\perfOut\perfOut} \delta\perfOut \rangle}
    {\label{prob:min_sup_general_analysis_zero}\tag{$\mathcal{H}_{2}$-opt-0}}{}
    \addConstraint{\delta \unceIn}{= \allones_\tau \delta \xi +\boldclosedloopG_{\zSignal\unceOut} \unceOut,}
    \addConstraint{\delta\perfOut}{= \boldclosedloopG_{\perfOut\perfIn} \delta \xi + \boldclosedloopG_{\ySignal\unceOut}\unceOut,}
    \addConstraint{\unceOut}{= \lifth(\delta \unceIn).}
    \end{mini}
where $h(\bullet) := f(\bullet + z_\star)$. 

\noindent\emph{See Appendix~\ref{appendix:proof_lemma} for a proof.}
\end{lemma}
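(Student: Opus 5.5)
The argument is a direct change of variables. I will show that, for any fixed choice of $\boldclosedloopG_{\zSignal\unceOut}$ and $\boldclosedloopG_{\ySignal\unceOut}$, the feasible closed-loop trajectories of \eqref{prob:min_sup_general_analysis} and \eqref{prob:min_sup_general_analysis_zero} correspond one-to-one. Corresponding trajectories carry the same gradient signal $\unceOut$ and the same cost. The inner suprema then agree, and so do the outer minimizations.

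The first step concerns the function class. The map $f\mapsto h$, with $h(\bullet)=f(\bullet+z_\star)$, is a bijection from $\{f\in\smoothstrgcvxS_{\strgcvxParam,\smoothParam} : \nabla f(z_\star)=0\}$ onto $\smoothstrgcvxS^0_{\strgcvxParam,\smoothParam}$. Indeed,
\[
h(z)-\tfrac{\strgcvxParam}{2}\|z\|^2=f(z+z_\star)-\tfrac{\strgcvxParam}{2}\|z+z_\star\|^2+(\text{affine in } z).
\]
Convexity is preserved under translation and under adding affine terms, so the strong convexity condition transfers, and the $\smoothParam$ condition is handled in the same way. Moreover, $\nabla h(0)=\nabla f(z_\star)=0$.

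One subtlety needs care here. The supremum in \eqref{prob:min_sup_general_analysis} runs over all $f$, each with its own minimizer $z_\star$, while $\xi$ has mean $z_\star$. I will read the problem as: for each $f$, $z_\star$ is its minimizer and the distribution of $\xi$ is centred there. The substitution is then performed for each $f$ separately. Since $\delta\xi$ has the same zero-mean, identity-covariance law for every $f$, the reformulated problem no longer depends on $z_\star$.

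The second step is the moments: $\mathbb{E}[\delta\xi]=\mathbb{E}[\xi]-z_\star=0$, and covariance is invariant under shifts, so $\operatorname{Cov}(\delta\xi)=\boldsymbol{\Sigma_0}$.

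The third step checks the constraints. Subtracting $\zSignal_\star=\allones_\tau\otimes z_\star$ from the $\unceIn$-equation gives
\[
\delta\unceIn=\allones_\tau\otimes\delta\xi+\boldclosedloopG_{\zSignal\unceOut}\unceOut .
\]
Subtracting $\boldclosedloopG_{\perfOut\perfIn}z_\star$ from the $\perfOut$-equation gives
\[
\delta\perfOut=\boldclosedloopG_{\perfOut\perfIn}\delta\xi+\boldclosedloopG_{\ySignal\unceOut}\unceOut .
\]
The feedback equation becomes $\unceOut=\liftf(\delta\unceIn+\zSignal_\star)=\lifth(\delta\unceIn)$, because $\nabla h(z)=\nabla f(z+z_\star)$ pointwise. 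The map between trajectories is invertible, so the correspondence is bijective. Existence and uniqueness of trajectories are automatic: $\boldclosedloopG_{\zSignal\unceOut}$ has relative degree $1$, so the loop can be solved recursively in $t$.

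The last step is the cost. With \eqref{eq:closed_loop_matrices}, $\boldclosedloopG_{\perfOut\perfIn}z_\star=\allones_\tau\otimes z_\star=\zSignal_\star$. Hence $\perfOut-\zSignal_\star=\delta\perfOut$, and the two objectives coincide realization-wise and therefore in expectation.

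The only real obstacle is making the quantifier structure over $f$ and $z_\star$ precise, as discussed in the first step. Everything else is bookkeeping.
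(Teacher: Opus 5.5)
Your proposal is correct and follows essentially the same route as the paper's proof in Appendix A. Both translate $f$ to $h(\bullet)=f(\bullet+z_\star)$, use the fact that $f\mapsto h$ is a bijection onto the class with $\nabla h(0)=0$ for a fixed minimizer, shift $\xi$ and both channels by $z_\star$, and use $\boldsymbol{G}_{y\xi}z_\star=\boldsymbol{z}_\star$ to identify the costs. Your per-$f$ handling of the supremum (centring $\xi$ at each function's own minimizer) and your remark that relative degree $1$ makes the loop recursively solvable are details the paper leaves implicit; they sharpen the argument without changing it.
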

By reducing the considered set of functions to $\smoothstrgcvxS_{\strgcvxParam, \smoothParam}^0$, the use of the IQC multipliers presented in the previous subsection is enabled. In the following sections, notation is abused by assuming $ f \in \smoothstrgcvxS_{\strgcvxParam, \smoothParam}^0$ and dropping the $\delta$ in the deviation variables.

\section{Robust Analysis and Synthesis of the Terminal Distance Problem}
\label{sec:robust_analysis_synthesis}
\noindent Solving Problem~\eqref{prob:min_sup_general_analysis_zero} is challenging because it involves optimization over the infinite-dimensional function class $f \in  \smoothstrgcvxS_{\strgcvxParam,\smoothParam}^0$. By replacing the constraint \(\wSignal =  \liftf(\zSignal)\) 
with the IQC~\eqref{eq:iqc_inequality}
for a multiplier \(\boldsymbol{\multiplierM} = \multiplierM \otimes \eye_\optdim\) and \(\multiplierM \in \dhmatricesSet_\tau\), Problem~\eqref{prob:min_sup_general_analysis_zero} can be relaxed.
\begin{theorem}[cf. \cite{adel2025}]
\label{cor:io_sdp_relaxation}
Consider Problem~\eqref{prob:min_sup_general_analysis_zero} together with the
system matrices~\eqref{eq:closed_loop_matrices} and the performance weighting for the terminal distance.
Then, the problem
\begin{mini!}
{   \substack{
    \closedloopG_1 \in \linsysSet^1(1,\tau-1), \\
   \multiplierM \in \dhmatricesSet_\tau, \closedloopG_2,\multiplierP}
}
    {\trace(\multiplierP)}
    {}{}
     \addConstraint{\begin{aligned}
        &(\bullet)^\transpose  \begin{pmatrix}
         \eye_\optdim & \closedloopG_2\otimes I_\optdim \end{pmatrix} -(\bullet)^\transpose\multiplierP\begin{pmatrix}
            \eye & \zeros
        \end{pmatrix} +\\
    &\quad (\bullet)^\transpose 
        \iqcmultiplier_\multiplierM
        \begin{pmatrix}
        \allones_{\tau} \otimes \eye_\optdim & \closedloopG_1\otimes I_\optdim \\
        \zeros & \eye
        \end{pmatrix} \prec \zeros,
    \end{aligned}\label{eq:input_output_analysis_problem}}{}
     \addConstraint{\boldsymbol{\multiplierM}\label{eq:eq_2}}{= \multiplierM \otimes \eye_\optdim.}
     \addConstraint{}{\tag{$\mathcal{H}_{2}$-rel}\label{prob:UB_min_sup_io_problem2}}
\end{mini!}
provides an upper bound for the original problem, i.e.,
\(
    \text{\eqref{prob:min_sup_general_analysis_zero}}
    \;\le\;
    \text{\eqref{prob:UB_min_sup_io_problem2}}.
\)

\noindent\emph{See Appendix~\ref{appendix:proof_cor} for a proof.}
\end{theorem}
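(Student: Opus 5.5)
The argument is a standard S-procedure: fix any feasible point of \eqref{prob:UB_min_sup_io_problem2} and show that its objective value $\trace(\multiplierP)$ upper-bounds the cost of \eqref{prob:min_sup_general_analysis_zero} for the same algorithm parameters $\closedloopG_1,\closedloopG_2$. Taking the infimum over feasible points then gives the claimed inequality. The key steps, in order, are as follows.

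\textbf{Step 1 (rewrite the cost).} Fix $f\in\smoothstrgcvxS^0_{\strgcvxParam,\smoothParam}$ and a realization of $\xi$. Since the weighting selects only the terminal block, the closed loop gives $y(\tau)=\xi+(\closedloopG_2\otimes\eye_\optdim)\wSignal$ and $\zSignal=\allones_\tau\otimes\xi+(\closedloopG_1\otimes\eye_\optdim)\wSignal$, where $\wSignal=\liftf(\zSignal)$. Because $\closedloopG_1$ is strictly lower triangular, $\wSignal$ is determined recursively from $\xi$, so the loop is well posed and $\wSignal$ is a measurable function of $\xi$. Setting $v:=\col(\xi,\wSignal)$, the cost becomes
\[
\|y(\tau)\|^2=v^\transpose(\bullet)^\transpose\begin{pmatrix}\eye_\optdim & \closedloopG_2\otimes\eye_\optdim\end{pmatrix}v,
\]
and $\col(\zSignal,\wSignal)$ is the image of $v$ under the block matrix appearing in \eqref{eq:input_output_analysis_problem}.

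\textbf{Step 2 (evaluate the LMI along trajectories).} Multiply \eqref{eq:input_output_analysis_problem} from the left by $v^\transpose$ and from the right by $v$. This gives
\[
\|y(\tau)\|^2-\xi^\transpose\multiplierP\xi+\col(\zSignal,\wSignal)^\transpose\iqcmultiplier_\multiplierM\col(\zSignal,\wSignal)\le 0.
\]
The inequality is strict for $v\neq0$, but nonstrict suffices here.

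\textbf{Step 3 (drop the IQC term).} Since $\multiplierM\in\dhmatricesSet_\tau$, the Zames--Falb lemma (cf.~\cite{FETZER2017}) applied to the lifted gradient $\liftf$ on the horizon of length $\tau$ shows that the IQC term is nonnegative. Hence $\|y(\tau)\|^2\le\xi^\transpose\multiplierP\xi$ holds pointwise, for every $f$ and every $\xi$.

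\textbf{Step 4 (take expectations).} Using $\mathbb{E}[\xi]=0$ and $\operatorname{Cov}(\xi)=\boldsymbol{\Sigma}_0=\eye$, we get $\mathbb{E}[\xi^\transpose\multiplierP\xi]=\trace(\multiplierP\boldsymbol{\Sigma}_0)=\trace(\multiplierP)$. The resulting bound is uniform in $f$, so the supremum over $f$ is at most $\trace(\multiplierP)$. Minimizing first over the multipliers $(\multiplierM,\multiplierP)$ and then over the algorithm parameters $(\closedloopG_1,\closedloopG_2)$ yields \eqref{prob:min_sup_general_analysis_zero}$\le$\eqref{prob:UB_min_sup_io_problem2}. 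By Lemma~\ref{lemma:h2_equivalence_error_coordinates}, this is equivalent to the bound for the original problem.

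\textbf{Main obstacle.} The delicate points are bookkeeping rather than depth. First, the multiplier dimension must match the horizon: signals live on $\horizonSet_{\tau-1}$, so one needs $\dhmatricesSet_\tau$ and the lemma applied with horizon $\tau-1$. Second, we must justify that the expectation is well defined. This holds because $\nabla f$ is Lipschitz, so $\wSignal$ grows at most linearly in $\xi$ and has finite second moments. Third, the trace in the objective must be read on the $\optdim$-dimensional block, i.e.\ $\multiplierP\in\mathbb{S}_\optdim$; the trace identity in Step 4 is used in exactly this form.
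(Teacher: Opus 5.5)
Your proof is correct, but it argues differently from the paper. The paper's appendix first weakens the gradient relation to the IQC holding only in expectation. It then rewrites the objective and that constraint as linear functions of the second-moment matrix $\Sigma$ of $(\xi,\mathbf{w})$, with its $(1,1)$ block fixed to $\Sigma_0$. Finally it obtains \eqref{prob:UB_min_sup_io_problem2} as the Lagrange dual of this moment SDP: $P$ dualizes the block constraint, $\Pi_M$ acts as the conic multiplier, finiteness of the supremum over $\Sigma\succeq 0$ produces the LMI, and weak duality gives the bound $\mathrm{tr}(P)$. You instead fix a feasible point and evaluate the LMI along each closed-loop trajectory. You drop the multiplier term using the deterministic Zames--Falb IQC, and only then take expectations; this is exactly the pathwise content of the paper's weak-duality step. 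Your route is more elementary: it never needs the expectation-relaxed IQC or the moment parametrization, and it handles well-posedness and integrability explicitly, which the paper glosses over. It also yields the stronger certificate $\|y(\tau)\|^2\le \xi^\top P\xi$ for every $f$ and every initial error. That certificate is what actually justifies the paper's later reading of $p$ as a deterministic upper bound on $\gamma(\kappa,\tau)^2$. The paper's route, in turn, explains where the LMI comes from rather than merely verifying it. It also frames \eqref{prob:UB_min_sup_io_problem2} as one half of a primal--dual pair, which is the natural setting for the tightness (zero duality gap against PEP) observed numerically.
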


The key constraint of this problem is the matrix inequality~\eqref{eq:input_output_analysis_problem}, whose dimension is $\optdim \tau$. For high-dimensional problems (large $\optdim$), this quickly becomes computationally demanding. The next corollary provides an equivalent formulation with a dimensionality-reduced constraint that replaces the original matrix inequality with one that assumes $\optdim = 1$.
\begin{corollary}
    \label{lemma:dimred}
    Consider the matrix inequality~\eqref{eq:input_output_analysis_problem}. Since all matrices appearing in the inequality are Kronecker products with $I_\optdim$ and the inequality is linear in $\multiplierP$, any feasible $\multiplierP$ can be replaced by $\multiplierP = p\eye_\optdim$ without loss of generality. The matrix inequality~\eqref{eq:input_output_analysis_problem} can then be reduced to 
\begin{equation}    \label{eq:simplified_input_output_analysis_problem_sdp}  
    \begin{aligned}
        &(\bullet)^\transpose 
        \begin{pmatrix}
            -\strgcvxParam\smoothParam(\multiplierM+\multiplierM^\transpose) & \strgcvxParam\multiplierM + \smoothParam\multiplierM^\transpose \\
            \strgcvxParam\multiplierM^\transpose + \smoothParam\multiplierM & -(\multiplierM + \multiplierM^\transpose)
        \end{pmatrix}
        \begin{pmatrix}
        \allones_{\tau} & \closedloopG_1 \\
        \zeros & \eye
        \end{pmatrix} +\\
    &\quad (\bullet)^\transpose \begin{pmatrix}
        1 & \closedloopG_2 \end{pmatrix} - (\bullet)^\transpose p\begin{pmatrix}
            1 & \zeros
        \end{pmatrix} \prec \zeros.
    \end{aligned}
\end{equation}
    Furthermore, the matrix inequality~\eqref{eq:input_output_analysis_problem} is feasible if and only if the matrix inequality~\eqref{eq:simplified_input_output_analysis_problem_sdp} is feasible.
\end{corollary}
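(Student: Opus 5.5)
The plan is to exploit the Kronecker structure of \eqref{eq:input_output_analysis_problem} directly. Write the left-hand side of \eqref{eq:input_output_analysis_problem} as $\mathcal{L}(\multiplierP)$. Every matrix in it except $\multiplierP$ has the form $X\otimes\eye_\optdim$: the block $\begin{pmatrix}\allones_\tau\otimes\eye_\optdim & \closedloopG_1\otimes\eye_\optdim\\ \zeros & \eye\end{pmatrix}$, the row $\begin{pmatrix}\eye_\optdim & \closedloopG_2\otimes\eye_\optdim\end{pmatrix}$, and $\iqcmultiplier_\multiplierM$, since $\boldsymbol{\multiplierM}=\multiplierM\otimes\eye_\optdim$. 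After a fixed permutation of rows and columns, this constant part becomes $\eye_\optdim\otimes N$ (equivalently $N\otimes \eye_\optdim$) for a matrix $N$ of size $(1+\tau)$. The $\multiplierP$-term is $-\diag(\multiplierP,\zeros)$, which is linear in $\multiplierP$.

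\textbf{Step 1 (restriction to $\multiplierP=p\eye_\optdim$).} Let $\multiplierP$ be feasible and set $p:=\trace(\multiplierP)/\optdim$. For every orthogonal $U\in\R^{\optdim\times\optdim}$, congruence with $\diag(U,\eye_\tau\otimes U)$ leaves every Kronecker factor $X\otimes\eye_\optdim$ invariant and maps $\multiplierP$ to $U^\transpose\multiplierP U$. Hence $U^\transpose\multiplierP U$ is also feasible. Because $\mathcal{L}$ is affine in $\multiplierP$ and the strict LMI set is convex, averaging over the Haar measure on $O(\optdim)$ gives feasibility of $\int U^\transpose\multiplierP U\,dU = p\eye_\optdim$. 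This average has the same trace, so the objective $\trace(\multiplierP)$ is unchanged. A simpler finite alternative is to average over the group of signed permutation matrices, which already forces the averaged matrix to be a scalar multiple of the identity.

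\textbf{Step 2 (reduction to $\optdim=1$).} With $\multiplierP=p\eye_\optdim$, the whole left-hand side equals, up to permutation, $N_p\otimes\eye_\optdim$. Here $N_p$ is the left-hand side of \eqref{eq:simplified_input_output_analysis_problem_sdp}. Since the eigenvalues of $N_p\otimes\eye_\optdim$ are those of $N_p$, each repeated $\optdim$ times, we have $N_p\otimes\eye_\optdim\prec0$ if and only if $N_p\prec0$.

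It remains to check that the scalar multiplier expands to the displayed block matrix. Computing
\[
\begin{pmatrix}\smoothParam & -1\\ -\strgcvxParam & 1\end{pmatrix}^\transpose\begin{pmatrix}0 & \multiplierM\\ \multiplierM^\transpose & 0\end{pmatrix}\begin{pmatrix}\smoothParam & -1\\ -\strgcvxParam & 1\end{pmatrix}
\]
blockwise gives $-\strgcvxParam\smoothParam(\multiplierM+\multiplierM^\transpose)$ in the $(1,1)$ block, $\strgcvxParam\multiplierM+\smoothParam\multiplierM^\transpose$ in the $(1,2)$ block, and $-(\multiplierM+\multiplierM^\transpose)$ in the $(2,2)$ block. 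This is routine algebra.

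\textbf{Both directions of the equivalence.} If \eqref{eq:simplified_input_output_analysis_problem_sdp} is feasible, then $\multiplierP=p\eye_\optdim$ is feasible for \eqref{eq:input_output_analysis_problem} by Step 2. Conversely, if \eqref{eq:input_output_analysis_problem} is feasible, Step 1 followed by Step 2 gives feasibility of \eqref{eq:simplified_input_output_analysis_problem_sdp}. The objectives correspond via $\trace(\multiplierP)=\optdim p$.

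\textbf{Main obstacle.} The delicate point is the invariance argument in Step 1. One must check that the permutation and congruence act consistently on the row $\begin{pmatrix}\eye_\optdim & \closedloopG_2\otimes\eye_\optdim\end{pmatrix}$, which mixes an $\optdim$-block with $\tau$ blocks of size $\optdim$. The right congruence is with $\diag(U,\eye_\tau\otimes U)$ on the variable space and $U$ on the output, so that $(\bullet)^\transpose(\cdot)$ transforms correctly. Strictness of the inequality is preserved under averaging because a convex combination of negative definite matrices is negative definite.
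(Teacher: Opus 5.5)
Your proof is correct. Your Step 2 (scalar $\Rightarrow$ full, via $N_p\otimes\eye_\optdim\prec 0 \iff N_p\prec 0$) is exactly the paper's first direction.

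The converse is where you diverge. The paper never symmetrizes $\multiplierP$. Instead it compresses the full inequality with a full-column-rank matrix of the form $(\cdot)\otimes\unitvec_1$, i.e.\ it restricts the quadratic form to vectors whose $\optdim$-blocks are all multiples of $\unitvec_1$. Since every other term is a Kronecker product with $\eye_\optdim$, this single congruence returns the scalar inequality directly, with $p=\unitvec_1^\transpose\multiplierP\unitvec_1$. That is shorter and needs no group structure, but it only transfers feasibility. In general $\optdim\,\unitvec_1^\transpose\multiplierP\unitvec_1\neq\trace(\multiplierP)$, so the claim that $\multiplierP=p\eye_\optdim$ is without loss of generality \emph{for the objective} $\trace(\multiplierP)$ needs an extra remark. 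For example, compress with the $\unitvec_i$ minimizing $\multiplierP_{ii}$, or average the $\optdim$ compressed inequalities, which are affine in $p$.

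Your invariance-plus-averaging argument delivers this directly, trace included. It is essentially the rigorous form of the heuristic written into the corollary's statement (Kronecker structure and linearity in $\multiplierP$). Your signed-permutation variant also keeps it finite and elementary.

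One correction to your ``routine algebra.'' With the middle factor $\begin{pmatrix}0&\multiplierM\\ \multiplierM^\transpose&0\end{pmatrix}$, as you wrote it and as in the paper's definition of $\boldsymbol{\Pi}_{\multiplierM}$, the $(1,2)$ block is $\smoothParam\multiplierM+\strgcvxParam\multiplierM^\transpose$, not $\strgcvxParam\multiplierM+\smoothParam\multiplierM^\transpose$. The matrix displayed in the corollary corresponds to $\multiplierM^\transpose$ in place of $\multiplierM$; the paper's proof of the convexification theorem indeed uses $\begin{pmatrix}0&\multiplierM^\transpose\\ \multiplierM&0\end{pmatrix}$. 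This does not affect the feasibility equivalence, because $\dhmatricesSet_\tau$ is closed under transposition. You should, however, state the identification with that relabeling rather than assert that the blocks match as written.
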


\begin{proof}Let $\Lambda$ denote the block-diagonal matrix whose diagonal blocks are the central matrices in~\eqref{eq:simplified_input_output_analysis_problem_sdp}, and let $\chi$ be the column stacked matrix of all right-hand matrices in~\eqref{eq:simplified_input_output_analysis_problem_sdp}. It is noted that~\eqref{eq:simplified_input_output_analysis_problem_sdp} can be equivalently written as $\chi^\transpose \Lambda \chi \prec 0$.

First, assume~\eqref{eq:simplified_input_output_analysis_problem_sdp} is feasible with solution $p$ and $\multiplierM$. Set $\multiplierP = p \eye_\optdim$ and substitute into~\eqref{eq:input_output_analysis_problem}. Then, \eqref{eq:input_output_analysis_problem} simplifies to
\(
(\chi \otimes \eye_\optdim)^\transpose (\Lambda \otimes \eye_\optdim) (\chi \otimes \eye_\optdim) 
= (\chi^\transpose \Lambda \chi) \otimes \eye_\optdim \prec 0,
\)
since $\chi^\transpose \Lambda \chi \prec 0$ and the the Kronecker product with $I_d$ preserves negative definiteness.

 For the other direction assume that the matrix inequality~\eqref{eq:input_output_analysis_problem} is feasible with $\multiplierP$. Then, perform right and left multiplication with $\diag(I_{\tau},I_{\tau},I_{\tau},1)\otimes e_1$ and its transpose, respectively. Note that $\diag(I_{\tau},I_{\tau},I_{\tau},1)\otimes e_1$ has full column rank. Multiplying a negative definite matrix from the right by a matrix of full column rank and from the left with its transpose yields a symmetric negative definite matrix. Further, define $p$ as 
$(I_{\tau} \otimes e_1)^\transpose \multiplierP (I_{\tau} \otimes e_1) =: p$, which yields exactly the matrix inequality~\eqref{eq:simplified_input_output_analysis_problem_sdp}.
\end{proof}

\subsection{Robust Analysis}
\noindent In the analysis setting, we assume a given optimization algorithm whose performance is to be evaluated over a horizon $\tau$. Consequently, the matrices $\closedloopG_1$ and $\closedloopG_2$ are known and fixed, and the matrix inequality in Problem~\eqref{prob:UB_min_sup_io_problem2} reduces to a linear matrix inequality (LMI) in the decision variables $\multiplierM$ and $p$. Thus, the variable $p$ is an upper bound on $\gamma^2(\kappa,\tau)$ from \eqref{eq:transient_bound}.
\subsection{Robust Synthesis}

\noindent If the algorithm matrices are decision variables, Problem~\eqref{prob:UB_min_sup_io_problem2} is nonconvex, because the multiplier and algorithm matrices enter bilinearly. Invoking Corollary~\ref{lemma:dimred}, Theorem~\ref{theorem:convexification_theorem1} convexifies the problem by introducing a bijective reparametrization that merges multiplier and algorithm matrices. To ensure that the inverse transformation preserves the admissible strictly causal algorithm structure, the multiplier is restricted to be causal. 
\begin{theorem}
\label{theorem:convexification_theorem1}
Restrict the multiplier in Problem~\eqref{prob:UB_min_sup_io_problem2} to
$\multiplierM \in \dhmatricesSet_\tau \cap \linsysSet^0(1,\tau-1)$. Then, the resulting problem is equivalent to the convex SDP
\vspace{-0.5em}
\begin{mini!}[1]
{\substack{\multiplierM \in \dhmatricesSet_{\tau}\cap \linsysSet^0(1,\tau-1),p,\\
\youlaQ_\multiplierM\in\linsysSet^1(1,\tau-1),\ \youlaQ_2}}
{p}{}{}
\addConstraint{\begin{pmatrix}
-2\strgcvxParam\smoothParam\allones_{\tau}^\transpose\multiplierM\allones_{\tau}-p & \star & \star\\
\smoothParam\multiplierM\allones_{\tau}-\strgcvxParam(\youlaQ_\multiplierM{-}\multiplierM)^\transpose\allones_{\tau} & \He(\youlaQ_\multiplierM{-}\multiplierM) & \star\\
1 & \youlaQ_2 & -1
\end{pmatrix} \prec \zeros. \label{eq:cvx_sdp_constraint}}{}
\end{mini!}
\begin{equation}
    \label{prob:UB_min_sup_io_problem3}\tag{$\mathcal{H}_{2}$-cvx}
\end{equation}
Feasibility of~\eqref{eq:cvx_sdp_constraint} guarantees that $\multiplierM$ is nonsingular, as shown in the proof. The controller parameters can then be obtained from any candidate solution $(\youlaQ_{\multiplierM},\youlaQ_2,\multiplierM)$ through the equations
\begin{equation}
\begin{split}    
        \closedloopG_1 &= \bigl((\smoothParam-\strgcvxParam)\eye +\strgcvxParam\multiplierM^{-1}\youlaQ_{\multiplierM}\bigr)^{-1}\multiplierM^{-1}\youlaQ_{\multiplierM}, \\
        \closedloopG_2 &= \youlaQ_2(\eye -\strgcvxParam\closedloopG_1).
\end{split}
\end{equation}
Consequently, the optimal value of
Problem~\eqref{prob:UB_min_sup_io_problem2} is no larger than
that of~\eqref{prob:UB_min_sup_io_problem3}, whereas the latter
is equal to the optimal value of
Problem~\eqref{prob:UB_min_sup_io_problem2} restricted to causal
multipliers.
\end{theorem}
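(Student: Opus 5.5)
The plan is to start from the scalar inequality~\eqref{eq:simplified_input_output_analysis_problem_sdp} of Corollary~\ref{lemma:dimred}, which is equivalent to~\eqref{eq:input_output_analysis_problem}, and apply a congruence transformation. The transformation should turn the bilinear products of $\multiplierM$ with $\closedloopG_1,\closedloopG_2$ into new linear variables. I will write~\eqref{eq:simplified_input_output_analysis_problem_sdp} as a quadratic form in $(\xi,\wSignal)$, with $\zSignal=\allones_\tau\xi+\closedloopG_1\wSignal$. By the factorization of $\iqcmultiplier_\multiplierM$, the IQC term equals $2\,u^\transpose \multiplierM^\transpose v$ (up to the ordering convention), where $u=\smoothParam\zSignal-\wSignal$ and $v=\wSignal-\strgcvxParam\zSignal$. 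The natural move is to use $v$ instead of $\wSignal$ as the free variable, i.e.\ a loop transformation around the lower sector bound.

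\textbf{Step 1 (change of variables).} Since $\closedloopG_1$ is strictly lower triangular, $\eye-\strgcvxParam\closedloopG_1$ is unipotent and hence invertible. The map $(\xi,\wSignal)\mapsto(\xi,v)$, with
\[
v=(\eye-\strgcvxParam\closedloopG_1)\wSignal-\strgcvxParam\allones_\tau\xi,
\]
is therefore a bijective congruence. Set $K:=\closedloopG_1(\eye-\strgcvxParam\closedloopG_1)^{-1}$, which is again strictly lower triangular. Then
\[
\zSignal=(\eye+\strgcvxParam K)\allones_\tau\xi+Kv, \qquad u=(\smoothParam-\strgcvxParam)\zSignal-v .
\]
The IQC term then contains the $v$-$v$ block $\He\bigl((\smoothParam-\strgcvxParam)\multiplierM K-\multiplierM\bigr)$, up to transposes. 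This suggests the substitutions
\[
\youlaQ_\multiplierM:=(\smoothParam-\strgcvxParam)\multiplierM K, \qquad \youlaQ_2:=\closedloopG_2(\eye-\strgcvxParam\closedloopG_1)^{-1}.
\]
Solving $K=\closedloopG_1(\eye-\strgcvxParam\closedloopG_1)^{-1}$ for $\closedloopG_1$ with $S:=\multiplierM^{-1}\youlaQ_\multiplierM$ (which commutes with $((\smoothParam-\strgcvxParam)\eye+\strgcvxParam S)^{-1}$) reproduces exactly the stated formulas for $\closedloopG_1$ and $\closedloopG_2$.

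\textbf{Step 2 (Schur complement and bookkeeping).} Next I will collect the $\xi$-$\xi$ and $\xi$-$v$ blocks. The $\xi$-$\xi$ entry should give $-2\strgcvxParam\smoothParam\allones^\transpose\multiplierM\allones-p$ plus contributions that must cancel. The $\xi$-$v$ entry should reduce to $\smoothParam\multiplierM\allones_\tau-\strgcvxParam(\youlaQ_\multiplierM-\multiplierM)^\transpose\allones_\tau$. The positive semidefinite performance term $(\bullet)^\transpose(1\ \ \youlaQ_2\text{-row})$ is then pulled out by a Schur complement, which produces the last row and the $-1$ entry in~\eqref{eq:cvx_sdp_constraint}. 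The main obstacle is this bookkeeping. The $\xi$-coefficients pick up terms such as $\strgcvxParam\youlaQ_2\allones_\tau$ and $\strgcvxParam K\allones_\tau$. I will have to check, possibly using a further triangular congruence in the $(\xi,v)$ coordinates together with the row sums of $\multiplierM$ built into the multiplier structure, that these terms collapse to the clean entries $1$ and the displayed $(2,1)$ block. If they do not collapse exactly, I would instead redefine $v$ with the $\xi$-shift chosen so that the performance row becomes $(1\ \ \youlaQ_2)$, and re-derive the cross terms from that choice.

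\textbf{Step 3 (nonsingularity, causality, bijectivity, and values).} Feasibility of~\eqref{eq:cvx_sdp_constraint} forces $\He(\youlaQ_\multiplierM-\multiplierM)\prec0$. Because $\youlaQ_\multiplierM$ is strictly lower triangular, its diagonal entries are $-2\multiplierM_{ii}<0$. Since $\multiplierM$ is lower triangular (causal), $\det\multiplierM=\prod_i\multiplierM_{ii}>0$, so $\multiplierM$ is nonsingular. Then $S=\multiplierM^{-1}\youlaQ_\multiplierM$ is strictly lower triangular, $(\smoothParam-\strgcvxParam)\eye+\strgcvxParam S$ is invertible and lower triangular, and so $\closedloopG_1\in\linsysSet^1(1,\tau-1)$. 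Conversely, any feasible $(\closedloopG_1,\closedloopG_2,\multiplierM,p)$ with causal $\multiplierM$ maps to a feasible $(\youlaQ_\multiplierM,\youlaQ_2,\multiplierM,p)$, and this map preserves the strict lower-triangular structure. The reparametrization is therefore a bijection between the two feasible sets with the same objective $p$. This shows that~\eqref{prob:UB_min_sup_io_problem3} equals~\eqref{prob:UB_min_sup_io_problem2} restricted to causal multipliers. Since that restriction only shrinks the feasible set, the unrestricted value of~\eqref{prob:UB_min_sup_io_problem2} is no larger.
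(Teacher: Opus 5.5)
Your architecture matches the paper's proof. Both use the congruence with $(\eye-\strgcvxParam\closedloopG_1)^{-1}$ and the substitutions $\youlaQ_\multiplierM=\multiplierM\youlaQ_1$, where $\youlaQ_1=(\smoothParam-\strgcvxParam)\closedloopG_1(\eye-\strgcvxParam\closedloopG_1)^{-1}$, and $\youlaQ_2=\closedloopG_2(\eye-\strgcvxParam\closedloopG_1)^{-1}$. Both then apply a Schur complement to the rank-one performance term, and both use the diagonal argument to get $\multiplierM_{ii}>0$, nonsingularity of $\multiplierM$, and strict causality of the recovered $\closedloopG_1$. Step~3 is correct as written.

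What needs correcting is the shift $-\strgcvxParam\allones_\tau\xi$ you build into $v$ in Step~1, together with your expectation in Step~2 that the extra terms cancel. With $v=\wSignal-\strgcvxParam\zSignal$, three things happen:
\begin{itemize}
\item the IQC term $2v^\transpose\multiplierM u$ has no $\xi\xi$ contribution at all;
\item its $(v,\xi)$ block is $(\smoothParam-\strgcvxParam)\multiplierM\allones_\tau+\strgcvxParam\youlaQ_\multiplierM\allones_\tau$;
\item the performance row is $(1+\strgcvxParam\youlaQ_2\allones_\tau,\ \youlaQ_2)$.
\end{itemize}
These are not the displayed entries. The cross block differs from $\smoothParam\multiplierM\allones_\tau-\strgcvxParam(\youlaQ_\multiplierM-\multiplierM)^\transpose\allones_\tau$ by $\strgcvxParam\He(\youlaQ_\multiplierM-\multiplierM)\allones_\tau$, and the term $-2\strgcvxParam\smoothParam\allones_\tau^\transpose\multiplierM\allones_\tau$ is missing. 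No property of $\multiplierM$ (row sums, hyperdominance) makes these differences vanish. What reconciles them is the triangular congruence $\tilde v=v+\strgcvxParam\allones_\tau\xi$.

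So your fallback is the step you actually need: take $\tilde v=(\eye-\strgcvxParam\closedloopG_1)\wSignal$ with no $\xi$-shift. This is exactly the paper's congruence $\diag(1,(\eye-\strgcvxParam\closedloopG_1)^{-1})$. The two IQC factors then become $\smoothParam\allones_\tau\xi+(\youlaQ_1-\eye)\tilde v$ and $-\strgcvxParam\allones_\tau\xi+\tilde v$, and $y(\tau)=\xi+\youlaQ_2\tilde v$. Expanding gives the three displayed entries directly. Your shifted inequality is itself linear in $(p,\multiplierM,\youlaQ_\multiplierM,\youlaQ_2)$ and equivalent, but it is not~\eqref{eq:cvx_sdp_constraint}.

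Finally, do not treat the IQC ordering as a free convention. The placement you wrote, $2u^\transpose\multiplierM^\transpose v=2v^\transpose\multiplierM u$, is the one used in Corollary~\ref{lemma:dimred}, and it is not interchangeable with its transpose. With $\multiplierM^\transpose$ in that slot the new variable would be $\multiplierM^\transpose K$, which is not strictly lower triangular, so the causal reparametrization would fail.
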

 \begin{proof}
    First, the matrix inequality~\eqref{eq:simplified_input_output_analysis_problem_sdp} is rewritten as 
    \begin{equation*}
    \begin{split}
    (\bullet)^\transpose 
    &\begin{pmatrix}
        -p\unitvec_1\unitvec_1^\transpose & \zeros \\
        \zeros & 1
    \end{pmatrix} \begin{pmatrix}
        \allones_{\tau} & \closedloopG_1 \\
        1 & \closedloopG_2
    \end{pmatrix}+ \\ &(\bullet)^\transpose \begin{pmatrix}
        \zeros & \multiplierM^\transpose \\
        \multiplierM& \zeros
    \end{pmatrix}\begin{pmatrix}
        \smoothParam\allones_{\tau} & \smoothParam\closedloopG_1-\eye \\
        -\strgcvxParam\allones_{\tau} & \eye-\strgcvxParam\closedloopG_1
    \end{pmatrix}  \prec \zeros.
    \end{split}
\end{equation*} 
    The congruence transform $\diag(1, (\eye-\strgcvxParam\closedloopG_1)^{-1})$ is applied to the reformulated matrix inequality and yields
\begin{equation*}
    \begin{split}
        &(\bullet)^\transpose \begin{pmatrix}
        \zeros & \multiplierM^\transpose \\
        \multiplierM& \zeros
    \end{pmatrix}\begin{pmatrix}
        \smoothParam\allones_{\tau} & (\smoothParam\closedloopG_1-\eye)(\eye-\strgcvxParam\closedloopG_1)^{-1} \\
        -\strgcvxParam\allones_{\tau} & \eye
    \end{pmatrix}+ \\ &\quad (\bullet)^\transpose 
    \begin{pmatrix}
        -p\unitvec_1\unitvec_1^\transpose & \zeros \\
        \zeros & 1
    \end{pmatrix} \begin{pmatrix}
        \allones_{\tau} & \closedloopG_1(\eye-\strgcvxParam\closedloopG_1)^{-1} \\
        1 & \closedloopG_2(\eye-\strgcvxParam\closedloopG_1)^{-1}
    \end{pmatrix}  \prec \zeros.
    \end{split}
\end{equation*}
Furthermore, define the mapping $\closedloopG_1 \mapsto \youlaQ_1$
    \begin{equation}
    (\smoothParam\closedloopG_1-\eye)(\eye-\strgcvxParam\closedloopG_1)^{-1} = \youlaQ_1 - \eye, \label{eq:forward_transformation}
\end{equation}
with the inverse mapping given by
\(
    \closedloopG_1 = ((\smoothParam-\strgcvxParam)\eye + \strgcvxParam \youlaQ_1)^{-1}\youlaQ_1.    
\)
It is noted that this constitutes a bijective mapping from $\closedloopG_1 \in \linsysSet^1(1,\tau-1)$ to $\youlaQ_1\in \linsysSet^1(1,\tau-1)$, which is used to reformulate the above matrix inequality as
\begin{equation}
    \begin{split}
 &(\bullet)^\transpose \begin{pmatrix}
        \zeros & \eye \\
        \eye & \zeros
    \end{pmatrix}\begin{pmatrix}
    \smoothParam\multiplierM\allones_{\tau} &  \multiplierM\youlaQ_1- \multiplierM\\
        -\strgcvxParam\allones_{\tau} & \eye
    \end{pmatrix}+ \\
    &\quad(\bullet)^\transpose 
    \begin{pmatrix}
        -p\unitvec_1\unitvec_1^\transpose & \zeros \\
        \zeros & 1
    \end{pmatrix} \begin{pmatrix}
        \allones_{\tau} & \closedloopG_1(\eye-\strgcvxParam\closedloopG_1)^{-1} \\
        1 & \closedloopG_2(\eye-\strgcvxParam\closedloopG_1)^{-1}
    \end{pmatrix}  \prec \zeros.
    \end{split}
    \label{eq:matrix_ineq1}
\end{equation}
In addition, a second mapping $\closedloopG_2 \mapsto \youlaQ_2$ is defined through
\begin{equation}
    \youlaQ_2 = \closedloopG_2(\eye-\strgcvxParam\closedloopG_1)^{-1}, \text{ with the inverse } \closedloopG_2 = \youlaQ_2(\eye - \strgcvxParam\closedloopG_1).
    \label{eq:transformation2}
\end{equation}
Substituting equations~\eqref{eq:forward_transformation} and~\eqref{eq:transformation2} in inequality~\eqref{eq:matrix_ineq1} and multiplying out the first product while utilizing the fact that $\unitvec_1^\transpose\youlaQ_1 = 0$ (since $\youlaQ_1 \in \linsysSet^1(1,\tau-1)$) results in
\begin{equation}
    \begin{split}  
    (\bullet)^\transpose \begin{pmatrix}
        \zeros & \eye \\
        \eye & \zeros
    \end{pmatrix}&\begin{pmatrix}
    \smoothParam\multiplierM\allones_{\tau} &  \multiplierM\youlaQ_1- \multiplierM\\
        -\strgcvxParam\allones_{\tau} & \eye
    \end{pmatrix} -p\unitvec_1\unitvec_1^\transpose +\\
    & \quad \begin{pmatrix}
        1 \\ \youlaQ_2^\transpose
    \end{pmatrix}\begin{pmatrix}
        1 & \youlaQ_2
    \end{pmatrix}\prec \zeros.
    \end{split}
    \label{eq:temp_matrix_inequality}
\end{equation}
Two sources of nonlinearity remain in~\eqref{eq:temp_matrix_inequality}. The first results from the bilinear term $\multiplierM\youlaQ_1$. This nonlinearity is eliminated by defining the new variable $\youlaQ_\multiplierM:= \multiplierM\youlaQ_1$, which is strictly causal, because $\multiplierM\in \linsysSet^0(1,\tau-1)$ and $\youlaQ_1\in \linsysSet^1(1,\tau-1)$. 

The second nonlinearity is the rank-one term in~\eqref{eq:temp_matrix_inequality}, which is quadratic in $\youlaQ_2$. Applying the Schur complement lemma to this term shows that~\eqref{eq:temp_matrix_inequality} is equivalent to the LMI~\eqref{eq:cvx_sdp_constraint}. 

Lastly, to conclude the proof, we note that feasibility of the LMI~\eqref{eq:cvx_sdp_constraint} implies
$\He(\youlaQ_\multiplierM-\multiplierM)\prec0$. Since
$\youlaQ_\multiplierM\in \linsysSet^1(1,\tau-1)$ and
$\multiplierM\in\linsysSet^0(1,\tau-1)$, then $-2\multiplierM_{ii}<0$ for all $i$, which guarantees that $\multiplierM$ is nonsingular.

\end{proof}
It is noted that the synthesis problem is posed for a prescribed $\tau$. Its solution is a finite sequence of algorithm parameters designed to minimize the terminal distance after exactly $\tau$ iterations. Nevertheless, the terminal certificate can be used to construct an asymptotically convergent periodically restarted method.
\section{Simulation Results}
\label{sec:simulation}

\begin{figure*}[t]
    \centering
    \includegraphics[width = \textwidth]{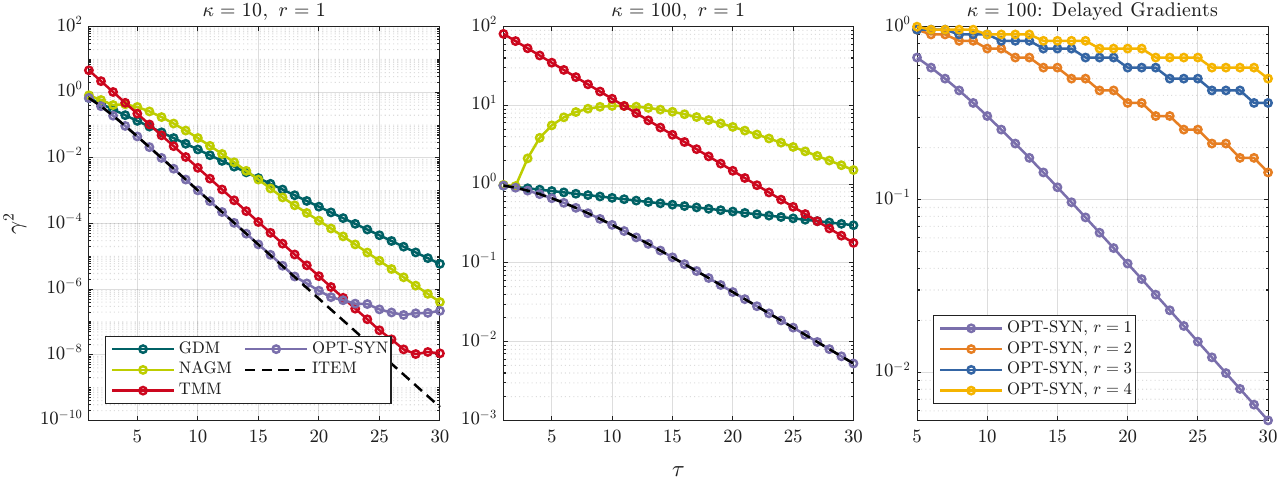}
    \caption{Performance comparison between the synthesized algorithms (denoted by OPT-SYN) and established first-order methods for different $\kappa$ and $r=1$. The right plot considers a fixed $\kappa = 100$ and relative degrees $r=1,2,3,4$ corresponding to delays of 0,1,2, and 3 sampling instants, respectively.}
    \label{fig:synthesis_problem_algorithms_comparison}
\end{figure*}

\noindent The synthesis Problem~\eqref{prob:UB_min_sup_io_problem3} is solved for
$\tau \in [1, 30]$ and $\kappa \in \{10, 100, 1000\}$ using
\textsc{MATLAB}/\textsc{YALMIP}~\cite{Lofberg2004} and \textsc{MOSEK}~\cite{mosek},
with $r = 1$. This yields a distinct time-varying
algorithm for every horizon; each point on the OPT-SYN curve, therefore, corresponds to
a different synthesized method. The synthesized algorithms (OPT-SYN) are benchmarked against the gradient descent method (GDM),
NAGM, the triple momentum method (TMM)~\cite{VanScoy2018}, and ITEM, where all benchmark methods are parameterized
for optimal asymptotic convergence on the function class $\smoothstrgcvxS_{\strgcvxParam, \smoothParam}$~\cite{Lessard_2016}. The upper bounds for the benchmark methods are
obtained by solving the corresponding analysis Problem~\eqref{prob:UB_min_sup_io_problem2}. The resulting upper bounds
$\gamma^2(\tau, \kappa)$ are shown in
Fig.~\ref{fig:synthesis_problem_algorithms_comparison}.

A key distinction between the methods is their transient behaviour. GDM guarantees
$\gamma^2 < 1$ for all $\tau \geq 1$,  albeit at a slower asymptotic convergence rate.
Momentum methods such as NAGM and TMM achieve superior asymptotic rates, but at the
cost of transient amplification: both exhibit $\gamma^2 > 1$ for small $\tau$,
indicating that the worst-case iterate distance to the optimum increases before
eventually decreasing. This transient cost grows more pronounced with increasing
$\kappa$ as the amplification region extends significantly, which is especially visible in the middle plot ($\kappa = 100$) of Figure~\ref{fig:synthesis_problem_algorithms_comparison}. Even TMM, which achieves the optimal asymptotic convergence rate, incurs a
non-negligible transient cost. In contrast to the quadratic case studied
in~\cite{mohammadi2023transient}, balanced initialization does
not eliminate worst-case transient amplification for NAGM over the class
$\mathcal{S}_{\mu,L}$.

The synthesized algorithms maintain $\gamma^2 < 1$
for all tested $\tau$ and $\kappa$. For any fixed finite-time horizon $\tau$, the synthesized algorithms guarantee a reduction of the error \emph{after $\tau$ steps}, even in the worst case. Moreover, larger horizons yield stronger error reduction, i.e., the achieved decrease over a horizon $\tau_2$ is greater than that obtained with any shorter horizon $\tau_1 < \tau_2$. Furthermore, as shown in Figure~\ref{fig:synthesis_problem_algorithms_comparison}, the upper bounds of OPT-SYN coincide with those of ITEM and the synthesized parameters recover the ITEM parameters, numerically confirming that the proposed method attains the information-theoretic complexity lower bound of~\cite{drori2021oraclecomplexitysmoothstrongly}. Therefore, no algorithm within the considered class can achieve tighter worst-case guarantees.

For large $\tau$, the upper bound curves of the synthesized algorithm become parallel
to those of TMM across all tested condition numbers, as seen in the middle plot of
Figure~\ref{fig:synthesis_problem_algorithms_comparison}. This reflects the convergence
of the ITEM parameters to those of TMM as $\tau \to \infty$, while retaining the
advantage of eliminating the transient amplification incurred by TMM at short horizons.

Independent validation via PESTO confirms that the IQC bounds match the exact PEP bounds for GDM, NAGM, and TMM, implying zero duality gap~\cite{taylor2016smoothstronglyconvexinterpolation}. For $\kappa = 10$ (left plot in Figure~\ref{fig:synthesis_problem_algorithms_comparison}) and large $\tau$,
minor numerical irregularities are observed in the upper bounds of the synthesized
algorithm, attributed to the computed values approaching the numerical precision of
the solver as $\gamma^2$ tends toward zero.


For delayed gradients, i.e., $r>1$, the benchmark GDM no
longer satisfies $\gamma^2<1$ over the tested horizons,
whereas the synthesized methods retain a certified terminal
contraction; see Figure~\ref{fig:synthesis_problem_algorithms_comparison}.
The computed bounds exhibit a staircase profile: they remain
constant until an additional delayed gradient sample becomes
available and decrease at the corresponding update times. This reflects the information pattern imposed by the relative
degree.

\section{Conclusion}
\label{sec:conclusion}
\noindent This paper developed an input--output framework for analyzing and synthesizing first-order methods for smooth strongly convex problems over a finite horizon, focusing on the terminal distance (although other objectives are possible). The analysis reduces to solving a convex semidefinite program that certifies worst-case transient performance. The same IQC-based formulation enables convex synthesis of a transient-optimal algorithm. Using time-varying Zames--Falb multipliers, the synthesized method guarantees descent after any horizon length  and recovers the ITEM parameters, thereby achieving the information-theoretic complexity lower bound. This shows that finite-horizon optimal performance, previously obtained via Performance Estimation Problems, can equally be realized through Zames--Falb IQC synthesis. Moreover, the framework seamlessly extends to problems with feedback delay, where the optimal algorithm respects the inherent latency and achieves the best possible finite-horizon performance.

Future work includes extending the framework to state-space realizations to address two practical limitations of the current input-output approach. First, the SDPs involve matrix variables of dimension $\tau \times \tau$, making computation prohibitive for large horizon lengths. Second, the synthesized algorithm requires access to the complete gradient history through the dense matrix $G_1$, incurring a memory footprint that grows with $\optdim\tau$ and hence is impractical in high dimensions.

\addtolength{\textheight}{-0cm}   




\appendix
\subsection{Proof of Lemma~\ref{lemma:h2_equivalence_error_coordinates}}
\label{appendix:proof_lemma}
\noindent Let $f \in \smoothstrgcvxS_{\strgcvxParam, \smoothParam}$ with minimizer $z_\star$, and define $h(\bullet) := f(\bullet + z_\star)$. Due to translation invariance, $h \in \smoothstrgcvxS_{\strgcvxParam,\smoothParam}$ with $\nabla h(0) = 0$, and hence $h \in \smoothstrgcvxS^0_{\strgcvxParam,\smoothParam}$.

Define $\delta \unceIn := \unceIn - \zSignal_\star$ and $\delta\perfOut := \perfOut - \boldclosedloopG_{\perfOut\perfIn}z_\star$. Substituting into the closed-loop equations of~\eqref{prob:min_sup_general_analysis} and removing constant terms yields the equivalent shifted system with $\delta \xi: = \xi-z_\star$.
Since $\boldclosedloopG_{\perfOut\perfIn} = \allones_\tau\otimes\eye_\optdim$, we have
$\boldclosedloopG_{\perfOut\perfIn}z_\star = \zSignal_\star$ and hence
$\delta\perfOut = \perfOut - \zSignal_\star$, so the objective
of~\eqref{prob:min_sup_general_analysis} coincides with
$\mathbb{E}\langle \delta\perfOut, \boldsymbol{R}_{\perfOut\perfOut}\delta\perfOut\rangle$.
Moreover, $\mathbb{E}[\xi] = z_\star, \operatorname{Cov}(\xi) = \boldsymbol{\Sigma_0}$ if and only if
$\mathbb{E}[\delta\xi] = 0, \operatorname{Cov}(\delta \xi) = \boldsymbol{\Sigma_0}$, and $f \mapsto h$ maps
$\{f\in\smoothstrgcvxS_{\strgcvxParam,\smoothParam} : \argmin f = z_\star\}$ bijectively
onto $\smoothstrgcvxS^0_{\strgcvxParam,\smoothParam}$. Therefore, the feasible sets correspond one-to-one with equal objective values.

\subsection{Proof of Theorem~\ref{cor:io_sdp_relaxation}}
\label{appendix:proof_cor}
\noindent The proof proceeds in three steps. 
First, the constraint $\wSignal =  \lifth(\zSignal)$ is replaced by the 
quadratic constraint
\begin{equation*}
0 \leq \mathbb{E}
\begin{pmatrix} \zSignal \\ \wSignal \end{pmatrix}^\transpose
\boldsymbol{\Pi}_{\multiplierM}
\begin{pmatrix} \zSignal \\ \wSignal \end{pmatrix}.
\end{equation*}
It is noted that this condition holds only in expectation and is, therefore, less strict than the deterministic IQCs in~\eqref{eq:quadratic_form_ozf_multiplier}.  
Moreover, replacing the exact functional relation by a quadratic constraint on the input-output signals represents a relaxation, since any
pair $(\zSignal, \wSignal)$ generated by an admissible gradient satisfies
\eqref{eq:quadratic_form_ozf_multiplier}, but the converse need not hold.
Hence the worst-case cost over the relaxed feasible set cannot be smaller
than over the original set, and the relaxed problem provides an
\emph{upper bound}.

In the second step, the input-output realization is substituted into the
objective and the quadratic constraint. Recall that, after the change to error coordinates, the notation
$\xi$ is used for the centered initial error $\delta\xi$. The problem then depends on
$(\zSignal, \wSignal)$ only through the second-order moment matrix
\[
\boldsymbol{\Sigma}:=
\begin{pmatrix}
  \mathbb{E}[\xi\xi^\transpose  ] & \mathbb{E}[\xi\mathbf{w}^\transpose  ] \\
  \mathbb{E}[\mathbf{w}\xi^\transpose  ] & \mathbb{E}[\mathbf{w}\mathbf{w}^\transpose  ]
\end{pmatrix}
=
\begin{pmatrix}
  \boldsymbol{\Sigma_0} & \boldsymbol{\Sigma}_{\zSignal\wSignal} \\
  \boldsymbol{\Sigma}_{\zSignal\wSignal}^\transpose   & 
  \boldsymbol{\Sigma}_{\wSignal\wSignal}
\end{pmatrix}\succeq 0,
\]
yielding the problem
\begin{mini*}
    {\boldclosedloopG_{\zSignal\unceOut},\,
     \boldclosedloopG_{\ySignal\unceOut}}
    {\sup_{\boldsymbol{\Sigma}_{\zSignal\wSignal},\,
           \boldsymbol{\Sigma}_{\wSignal\wSignal}}
     \left\langle \boldsymbol{R}_{\perfOut\perfOut},
     \begin{pmatrix}
       \boldclosedloopG_{\ySignal\perfIn}^\transpose \\
       \boldclosedloopG_{\ySignal\unceOut}^\transpose
     \end{pmatrix}^\transpose
     \boldsymbol{\Sigma}
     \begin{pmatrix}
       \boldclosedloopG_{\ySignal\perfIn}^\transpose \\
       \boldclosedloopG_{\ySignal\unceOut}^\transpose
     \end{pmatrix}
     \right\rangle}
    {}{}
    \addConstraint{
      \left\langle \boldsymbol{\Pi}_{\multiplierM},
      \begin{pmatrix}
        \boldclosedloopG_{\zSignal\perfIn}^\transpose & 0 \\
        \boldclosedloopG_{\zSignal\unceOut}^\transpose & \eye
      \end{pmatrix}^\transpose
      \boldsymbol{\Sigma}
      \begin{pmatrix}
        \boldclosedloopG_{\zSignal\perfIn}^\transpose & 0 \\
        \boldclosedloopG_{\zSignal\unceOut}^\transpose & \eye
      \end{pmatrix}
      \right\rangle}{\geq 0,}
    \addConstraint{\boldsymbol{\Sigma}}{\succeq 0.}
\end{mini*}

In the third step, the inner maximization over
$\boldsymbol{\Sigma}_{\zSignal\wSignal}$ and
$\boldsymbol{\Sigma}_{\wSignal\wSignal}$ is replaced by the Lagrange dual to obtain an upper
bound. A multiplier $\boldsymbol{P}$ is introduced for the
equality constraint that the $(1,1)$-block of $\boldsymbol{\Sigma}$ equals
$\boldsymbol{\Sigma_0}$. Since the IQC
constraint~\eqref{eq:quadratic_form_ozf_multiplier} holds for all
$\multiplierM \in \dhmatricesSet_\tau$ and $\dhmatricesSet_\tau$ is a
cone, $\iqcmultiplier_\multiplierM$ serves directly as the conic dual
variable with no additional scalar multiplier required. The Lagrangian is
\begin{align*}
&\mathcal{L}
=
\left\langle \boldsymbol{P}\!,\left(\boldsymbol{\Sigma}_{0} -
  \begin{pmatrix} \eye & 0 \end{pmatrix}
  \boldsymbol{\Sigma}
  \begin{pmatrix} \eye & 0 \end{pmatrix}^\transpose
\right)\right\rangle+ \\
&\quad 
\left\langle
\begin{pNiceMatrix}
  \boldclosedloopG_{\ySignal\perfIn} & \boldclosedloopG_{\ySignal\unceOut}\\ \hline
  \boldclosedloopG_{\zSignal\perfIn} & \boldclosedloopG_{\zSignal\unceOut}\\
  0 & \eye
\end{pNiceMatrix}^\transpose
\begin{pmatrix}
  \boldsymbol{R}_{\perfOut\perfOut} & 0\\
  0 & \iqcmultiplier_\multiplierM
\end{pmatrix}
\begin{pNiceMatrix}
  \boldclosedloopG_{\ySignal\perfIn} & \boldclosedloopG_{\ySignal\unceOut}\\ \hline
  \boldclosedloopG_{\zSignal\perfIn} & \boldclosedloopG_{\zSignal\unceOut}\\
  0 & \eye
\end{pNiceMatrix},
\boldsymbol{\Sigma}
\right\rangle.
\end{align*}
Since $\mathcal{L}$ is linear in $\boldsymbol{\Sigma}$, its supremum over
$\boldsymbol{\Sigma} \succeq 0$ is finite if and only if the combined
coefficient of $\boldsymbol{\Sigma}$ is negative semidefinite; otherwise
$\boldsymbol{\Sigma}$ can be scaled to drive $\mathcal{L}$ to $+\infty$.
Hence, enforcing finiteness is equivalent to
constraint~\eqref{eq:input_output_analysis_problem}.

Once this constraint holds, the supremum is attained at
$\boldsymbol{\Sigma} = 0$ and the Lagrangian reduces to
$\langle \boldsymbol{P}, \boldsymbol{\Sigma_0} \rangle =
\trace(\boldsymbol{P})$, since $\boldsymbol{\Sigma_0} = I$
by assumption. By weak duality, the resulting minimization over
$\multiplierP$ and $\multiplierM \in \dhmatricesSet_\tau$
subject to~\eqref{eq:input_output_analysis_problem} is exactly
Problem~\eqref{prob:UB_min_sup_io_problem2}, whose optimal value is an
upper bound on the worst-case cost.



\printbibliography

@misc{drori2012performancefirstordermethodssmooth,
  author       = {Drori, Yoel and Teboulle, Marc},
  title        = {Performance of first-order methods for smooth convex
                  minimization: a novel approach},
  year         = {2012},
  eprint       = {1206.3209},
  eprinttype   = {arXiv},
  primaryclass = {math.OC}
}

@misc{taylor2016smoothstronglyconvexinterpolation,
  author       = {Taylor, Adrien B. and Hendrickx, Julien M. and Glineur, Fran\c{c}ois},
  title        = {Smooth strongly convex interpolation and exact worst-case
                  performance of first-order methods},
  year         = {2016},
  eprint       = {1502.05666},
  eprinttype   = {arXiv},
  primaryclass = {math.OC}
}

@inproceedings{PESTO,
  author    = {Taylor, Adrien B. and Hendrickx, Julien M. and Glineur, Fran\c{c}ois},
  title     = {Performance estimation toolbox ({PESTO}): Automated worst-case
               analysis of first-order optimization methods},
  booktitle = {Proc. IEEE Conf. Decision Control (CDC)},
  year      = {2017},
  pages     = {1278--1283}
}

@misc{drori2021oraclecomplexitysmoothstrongly,
  author       = {Drori, Yoel and Taylor, Adrien},
  title        = {On the oracle complexity of smooth strongly convex minimization},
  year         = {2021},
  eprint       = {2101.09740},
  eprinttype   = {arXiv},
  primaryclass = {math.OC}
}

@misc{taylor2022optimalgradientmethodsmooth,
  author       = {Taylor, Adrien and Drori, Yoel},
  title        = {An optimal gradient method for smooth strongly convex minimization},
  year         = {2022},
  eprint       = {2101.09741},
  eprinttype   = {arXiv},
  primaryclass = {math.OC}
}

@inproceedings{DURR2012291,
  author    = {D{\"u}rr, Hans-Bernd and Ebenbauer, Christian},
  title     = {On a class of smooth optimization algorithms with applications
               in control},
  booktitle = {Proc. IFAC Conf. Nonlinear Model Predictive Control},
  year      = {2012},
  pages     = {291--298}
}

@inproceedings{multidimensionalHB,
  author    = {Michalowsky, Simon and Ebenbauer, Christian},
  title     = {The multidimensional $n$-th order heavy ball method and its
               application to extremum seeking},
  booktitle = {Proc. IEEE Conf. Decision Control (CDC)},
  year      = {2014},
  pages     = {2660--2666}
}

@article{Lessard_2016,
  author  = {Lessard, Laurent and Recht, Benjamin and Packard, Andrew},
  title   = {Analysis and design of optimization algorithms via integral
             quadratic constraints},
  journal = {SIAM J. Optim.},
  volume  = {26},
  number  = {1},
  pages   = {57--95},
  year    = {2016}
}

@article{convex_synthesis,
  author  = {Scherer, Carsten W. and Ebenbauer, Christian},
  title   = {Convex synthesis of accelerated gradient algorithms},
  journal = {SIAM J. Control Optim.},
  volume  = {59},
  number  = {6},
  pages   = {4615--4645},
  year    = {2021}
}

@article{scherer2025tutorial,
  author  = {Scherer, Carsten W. and Ebenbauer, Christian},
  title   = {A tutorial on convex design of optimization algorithms by
             integral quadratic constraints},
  journal = {Annu. Rev. Control Robot. Auton. Syst.},
  volume  = {9},
  year    = {2025},
  note    = {In press}
}

@article{mohammadi2023transient,
  author  = {Mohammadi, Hesameddin and Samuelson, Samantha and
             Jovanovi{\'c}, Mihailo R.},
  title   = {Transient growth of accelerated optimization algorithms},
  journal = {IEEE Trans. Autom. Control},
  volume  = {68},
  number  = {3},
  pages   = {1823--1830},
  month   = mar,
  year    = {2023},
  doi     = {10.1109/TAC.2022.3162154}
}

@inproceedings{FETZER2017,
  author    = {Fetzer, Matthias and Scherer, Carsten W.},
  title     = {Absolute stability analysis of discrete time feedback
               interconnections},
  booktitle = {Proc. IFAC World Congress},
  year      = {2017},
  pages     = {8447--8453}
}

@misc{scherer2023robustexponentialstabilityinvariance,
  author       = {Scherer, Carsten W.},
  title        = {Robust exponential stability and invariance guarantees with
                  general dynamic {O'Shea--Zames--Falb} multipliers},
  year         = {2023},
  eprint       = {2306.00571},
  eprinttype   = {arXiv},
  primaryclass = {math.OC}
}

@article{VanScoy2018,
  author  = {Van Scoy, Bryan and Freeman, Randy A. and Lynch, Kevin M.},
  title   = {The fastest known globally convergent first-order method for
             minimizing strongly convex functions},
  journal = {IEEE Control Syst. Lett.},
  volume  = {2},
  number  = {1},
  pages   = {49--54},
  month   = jan,
  year    = {2018}
}

@inproceedings{Lofberg2004,
  author    = {L{\"o}fberg, Johan},
  title     = {{YALMIP}: A toolbox for modeling and optimization in {MATLAB}},
  booktitle = {Proc. CACSD Conf.},
  year      = {2004}
}

@manual{mosek,
  author  = {{MOSEK ApS}},
  title   = {The {MOSEK} Optimization Toolbox for {MATLAB}, Version 11.0},
  year    = {2025},
}

@mastersthesis{adel2025,
  author  = {Abdelsamed, Adel},
  title   = {Analysis and design of the transient behavior of convex
             optimization algorithms using control theory},
  school  = {RWTH Aachen University},
  address = {Aachen, Germany},
  type    = {{M.S.} thesis},
  year    = {2025}
}

\end{document}